\theoremstyle{plain}
\newtheorem{thm}{Theorem}
\newtheorem{lem}[thm]{Lemma}
\newtheorem{claim}[thm]{Claim}
\newtheorem{cor}[thm]{Corollary}
\theoremstyle{definition}
\newtheorem{dfn}{Definition}
\theoremstyle{remark}
\newtheorem*{rem}{Remark}
\begin{document}
\title{Bipartite Induced Subgraphs and Well-Quasi-Ordering\thanks{This 
research was supported by DIMAP -- the Center for Discrete Mathematics 
and its Applications at the University of Warwick}} 

\author{Nicholas Korpelainen\thanks{DIMAP and 
Mathematics Institute, University of Warwick, Coventry, CV4 7AL, UK. 
E-mail: N.Korpelainen@warwick.ac.uk}\and Vadim V. Lozin\thanks{DIMAP 
and Mathematics Institute, University of Warwick, Coventry, CV4 7AL, UK. 
E-mail: V.Lozin@warwick.ac.uk}}
\date{}
\maketitle
 
\begin{abstract}
We study bipartite graphs partially ordered by the induced subgraph relation.
Our goal is to distinguish classes of bipartite graphs which are or are not 
well-quasi-ordered (wqo) by this relation. Answering an open question from \cite{Ding92}, 
we prove that $P_7$-free bipartite graphs are not wqo. On the other hand, we show that 
$P_6$-free bipartite graphs are wqo. We also obtain some partial results on subclasses 
of bipartite graphs defined by forbidding more than one induced subgraph.
\end{abstract}

{\em Keywords:} Bipartite graph; Induced subgraph; Well-quasi-ordering

\section{Introduction}
A binary relation $\le$ on a set $X$ is a {\it quasi-order} if it is reflexive and transitive.
Two elements $x,y\in X$ are said to be incomparable if neither $x\le y$ nor $y\le x$. 
An {\it antichain} in a quasi-order is a set of pairwise incomparable elements.   
A quasi-order $(X,\le)$ is a {\it well-quasi-order} if $X$
contains no infinite strictly decreasing sequences and no infinite antichains. 

In this paper, we study binary relations defined on sets of graphs. A graph $H$ is said to be a {\it minor} 
of a graph $G$ if $H$ can be obtained from $G$ by a (possibly empty) sequence of vertex deletions, 
edge deletions and edge contractions. According to the celebrated Graph Minor Theorem of Robertson and Seymour, 
the set of all graphs is well-quasi-ordered by the graph minor relation \cite{RS88}. 
This, however, is not the case for 
the more restrictive relations such as subgraphs or induced subgraphs. A graph $H$ is a {\it subgraph} 
of $G$ if $H$ can be obtained from $G$ by a (possibly empty) sequence of vertex deletions
and edge deletions; $H$ is an {\it induced subgraph} of $G$ if $H$ can be obtained from $G$ 
by a (possibly empty) sequence of vertex deletions. Clearly, the cycles $C_3,C_4,C_5,\ldots$ form 
an infinite antichain with respect to both relations. Except for this example, only a few other infinite antichains
are known with respect to the subgraph or induced subgraph relations. One of them is the sequence of graphs 
$H_1,H_2,H_3,\ldots$ represented in Figure~\ref{fig:H}(left). 
Moreover, Ding proved in \cite{Ding92} that, in a sense, the cycles $C_3,C_4,C_5,\ldots$ and the graphs $H_1,H_2,H_3,\ldots$
are the only two infinite antichains with respect to the subgraph relation. More formally, Ding proved that 
a class of graphs closed under taking subgraphs is well-quasi-ordered by the subgraph relation if and only if it 
contains finitely many graphs $C_n$ and $H_n$. The situation with induced subgraphs is less explored.

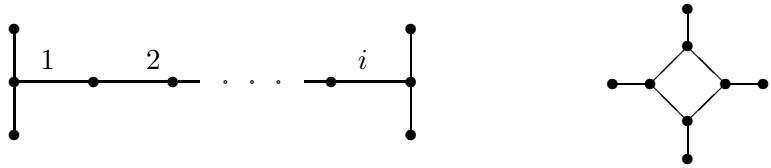
\begin{figure}[ht]
\begin{center}
\begin{picture}(170,80)
\put(-10,25){\circle*{4}} 
\put(20,25){\circle*{4}}
\put(50,25){\circle*{4}} 
\put(70,25){\circle{1}}
\put(80,25){\circle{1}} 
\put(90,25){\circle{1}}
\put(110,25){\circle*{4}} 
\put(140,25){\circle*{4}}
\put(-10,45){\circle*{4}} 
\put(-10,5){\circle*{4}}
\put(140,45){\circle*{4}} 
\put(140,5){\circle*{4}}
\put(-8,25){\line(1,0){26}} 
\put(22,25){\line(1,0){26}}
\put(52,25){\line(1,0){8}} 
\put(100,25){\line(1,0){8}}
\put(112,25){\line(1,0){26}} 
\put(-10,27){\line(0,1){16}}
\put(-10,23){\line(0,-1){16}} 
\put(140,27){\line(0,1){16}}
\put(140,23){\line(0,-1){16}} 
\put(0,30){1} \put(40,30){2}
\put(120,30){$i$}
\end{picture}
\begin{picture}(130,60)
\setlength{\unitlength}{0.5mm}
\put(30,17){\circle*{3}}
\put(40,17){\circle*{3}} 
\put(50,-3){\circle*{3}} 
\put(50,7){\circle*{3}}
\put(50,27){\circle*{3}} 
\put(50,37){\circle*{3}}
\put(60,17){\circle*{3}}
\put(70,17){\circle*{3}} 
\put(40,17){\line(-1,0){10}} 
\put(40,17){\line(1,1){10}}
\put(40,17){\line(1,-1){10}} 
\put(60,17){\line(1,0){10}}
\put(60,17){\line(-1,1){10}} 
\put(60,17){\line(-1,-1){10}}
\put(50,37){\line(0,-1){10}} 
\put(50,-3){\line(0,1){10}} 
\end{picture}
\end{center}
\caption{Graphs $H_i$ (left) and $Sun_4$ (right)} 
\label{fig:H}
\end{figure}

Damaschke \cite{Damaschke} proved that the class of cographs is well-quasi-ordered by induced subgraphs. 
A cograph is a graph whose every induced subgraph with at least two vertices is either disconnected or 
the complement of a disconnected graph. The class of cographs is precisely the class of $P_4$-free graphs, 
i.e., graphs containing no $P_4$ as an induced subgraph. In \cite{Ding92}, Ding studied bi-cographs, i.e., 
the bipartite analog of cographs: these are bipartite graphs whose every induced subgraph with at least 
two vertices is either disconnected or the {\it bipartite complement} of a disconnected graph. Ding proved 
that the class of bi-cographs is also well-quasi-ordered by induced subgraphs. In terms of forbidden induced subgraphs
this is precisely the class of $(P_7,Sun_4,S_{1,2,3})$-free bipartite graphs \cite{Ding92} (see also \cite{GV97}), 
where $Sun_4$ is the graph represented in Figure~\ref{fig:H}(right) and $S_{1,2,3}$ is a tree with 3 leaves being 
of distance 1,2,3 from the only vertex of degree 3. 

Obviously, exclusion of an induced path is a necessary condition for a class of graphs defined 
by finitely many forbidden induced subgraphs to be well-quasi-ordered, since otherwise the class contains 
infinitely many cycles. It is also necessary for such classes to exclude the complement of an induced path, 
since the complements of cycles also form an antichain with respect to the induced subgraph relation. 
In the case of bipartite graphs, together with an induced path one also has to exclude the {\it bipartite complement} 
$\widetilde{P}_k$ of an induced path $P_k$. Excluding an induced path and the bipartite complement of an induced path is not, 
however, sufficient for a class of bipartite graphs to be well-quasi-ordered. In \cite{Ding92}, Ding found an infinite
antichain of $(P_8,\widetilde{P}_8)$-free bipartite graphs. On the other hand, he proved that $(P_6,\widetilde{P}_6)$-free 
bipartite graphs are well-quasi-ordered by induced subgraphs. Observe that the bipartite complement of a $P_7$ is
a $P_7$ again. The question whether the class of $P_7$-free bipartite graphs is well-quasi-ordered remained open
for about 20 years. In the present paper we answer this question negatively by exhibiting an antichain of 
$P_7$-free bipartite graphs. Moreover, we show that this antichain is also $Sun_4$-free. 
On the other hand, we show that $(P_7,Sun_1)$-bipartite graphs are well-quasi-ordered by the induced subgraph
relation, where $Sun_1$ is the graph obtained from $Sun_4$ by deleting 3 vertices of degree 1. We also
obtain two other positive results. First, we show that $(P_7,S_{1,2,3})$-free bipartite graphs are well-quasi-ordered 
by induced subgraphs, generalizing both the bi-cographs and $P_6$-free graphs. Second, we prove that 
$P_k$-free bipartite permutation graphs are well-quasi-ordered by induced subgraphs for any value of $k$.
The latter fact is in contrast with one more negative result of the present paper: by strengthening 
the Ding's idea, we show that $(P_8,\widetilde{P}_8)$-free bipartite graphs are not well-quasi-ordered 
even when restricted to {\it biconvex graphs}, a class generalizing bipartite permutation graphs. 
The relationship between the classes of graphs under consideration is represented in Figure~\ref{fig:inclusion}.

\begin{figure}
\begin{center}
\begin{picture}(400,200)

\put(150,170)
{
\oval(205,20)
\makebox(0,0)
{\scriptsize The class of   
$(P_8,\widetilde{P}_8)$-free bipartite graphs
}
}

\put(150,130){\oval(150,20)
\makebox(0,0)
{\scriptsize $(P_7,Sun_4)$-free bipartite graphs}
}

\put(310,130){\oval(150,20)
\makebox(0,0)
{\scriptsize $(P_8,\widetilde{P}_8)$-free biconvex graphs}
}



\put(265,60){\oval(150,20)
\makebox(0,0)
{\scriptsize $P_7$-free bipartite permutation graphs}}

\put(160,90){\oval(96,20)
\makebox(0,0)
{\scriptsize $(P_7,Sun_1)$-free bipartite}}

\put(60,90){\oval(96,20)
\makebox(0,0)
{\scriptsize $(P_7,S_{1,2,3})$-free bipartite}}

\put(325,90){\oval(150,20)
\makebox(0,0)
{\scriptsize $P_k$-free bipartite permutation graphs}}

\put(40,20){\oval(86,20)
\makebox(0,0)
{\scriptsize $(P_6,\widetilde{P}_6)$-free bipartite}}

\put(148,20){\oval(120,20)
\makebox(0,0)
{\scriptsize $(P_7,Sun_4,S_{1,2,3})$-free bipartite}}

\put(40,60){\oval(90,20)
\makebox(0,0)
{\scriptsize $P_6$-free bipartite graphs}}

\put(60,100){\line(0,1){60}}
\put(110,160){\line(0,-1){20}}
\put(110,120){\line(0,-1){90}}
\put(180,120){\line(0,-1){20}}
\put(210,120){\line(0,-1){50}}
\put(245,120){\line(0,-1){50}}
\put(245,160){\line(0,-1){20}}

\put(310,80){\line(0,-1){10}}
\put(40,30){\line(0,1){20}}
\put(40,70){\line(0,1){10}}
\put(95,30){\line(0,1){50}}
\qbezier[250](0,150)(200,150)(400,150)
\qbezier[250](0,40)(200,40)(400,40)
\put(0,110){\line(1,0){400}}

\put(-20,120){\it\small Not WQO}
\put(-20,95){\it\small WQO}
\put(293,153){\it\small Previously known results}
\put(350,142){\it\small New results}

\put(350,43){\it\small New results}
\put(293,32){\it\small Previously known results}

\end{picture}
\end{center}
\caption{Inclusion relationships between subclasses of bipartite graphs}
\label{fig:inclusion}
\end{figure}
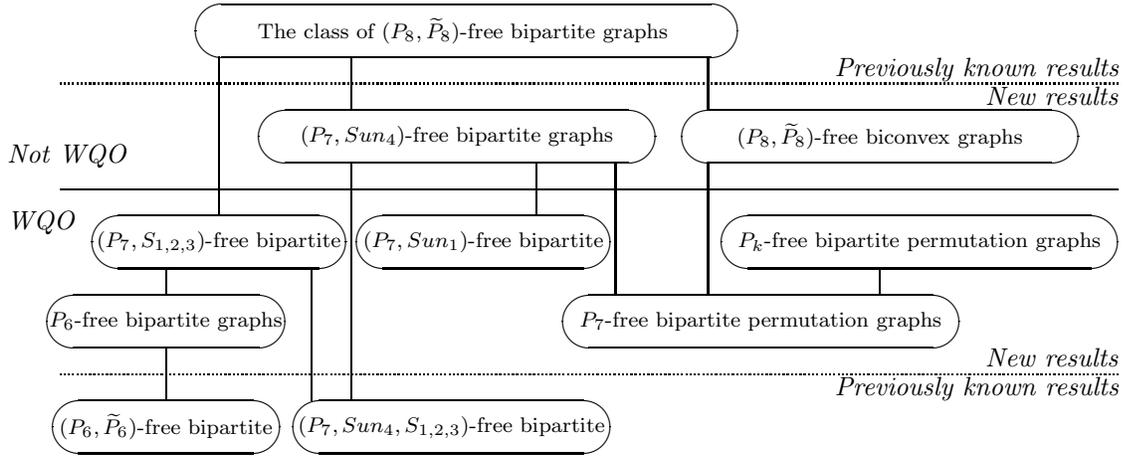
  
All graphs in this paper are undirected, without loops or multiple edges. 
The vertex set of a graph $G$ is denoted $V(G)$ and its edge set $E(G)$.
For a subset $U\subseteq V(G)$, by $G[U]$ we denote the subgraph of $G$
induced by $U$. The neighborhood of a vertex $v\in V(G)$ (i.e., the 
set of vertices of $G$ adjacent to $v$) is denoted $N_G(v)$. 
The degree of a vertex is the number of its neighbors. A graph is 1-regular 
if each of its vertices has degree 1. 

As usual, we denote by $P_n$, $C_n$ and $K_n$ the chordless path, the chordless cycle 
and the complete graph on $n$ vertices. Also, $2K_2$ is the disjoint union of two
copies of $K_2$. 

A graph is bipartite if the vertex set of the graph can be split into two parts
each of which is an independent set, i.e., a set of pairwise nonadjacent vertices. 
The bipartite complement of a bipartite graph $G=(V_1,V_2,E)$ with parts $V_1$ and $V_2$
and vertex set $E$ is a bipartite graph $\widetilde{G}=(V_1,V_2,V_1\times V_2-E)$.

We say that a graph $G$ is $H$-free if $G$ contains no copy of $H$ as an induced subgraph.
It is well known (and not difficult to see) that a $2K_2$-free bipartite graph possesses
the property that the vertices in each part of the graph can be linearly ordered under 
inclusion of their neighborhoods.   

\section{Not well-quasi-ordered classes of bipartite graphs}
In \cite{Ding92}, Ding proved that the class of $(P_8,\widetilde{P}_8)$-free bipartite graphs
is not well-quasi-ordered by the induced subgraph relation. In this section, we strengthen
this result in two ways. First, we show that $P_7$-free bipartite graphs are not wqo. 
Then we prove that $(P_8,\widetilde{P}_8)$-free {\it biconvex} graphs are not wqo.
To prove the results, in both cases we use the notion of a permutation,
i.e., a bijection of the set $[n]:=\{1,2,\ldots,n\}$ to itself. To represent a permutation 
$\pi :[n]\to [n]$, we use one of the following two ways: 
\begin{itemize}
\item one-line notation, which is the ordered sequence $(\pi(1),\pi(2),\ldots,\pi(n))$. 
\item a diagram (see Figure~\ref{fig:pi-12} for an example).  
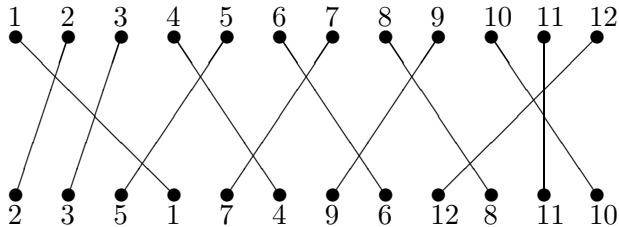
\begin{figure}[ht]
\begin{center}
\begin{picture}(160,70)
\put(0,0){\circle*{5}} 
\put(20,0){\circle*{5}}
\put(40,0){\circle*{5}} 
\put(60,0){\circle*{5}}
\put(80,0){\circle*{5}} 
\put(100,0){\circle*{5}}
\put(120,0){\circle*{5}} 
\put(140,0){\circle*{5}}
\put(160,0){\circle*{5}} 
\put(180,0){\circle*{5}}
\put(200,0){\circle*{5}} 
\put(220,0){\circle*{5}}

\put(0,60){\circle*{5}} 
\put(20,60){\circle*{5}}
\put(40,60){\circle*{5}} 
\put(60,60){\circle*{5}}
\put(80,60){\circle*{5}} 
\put(100,60){\circle*{5}}
\put(120,60){\circle*{5}} 
\put(140,60){\circle*{5}}
\put(160,60){\circle*{5}} 
\put(180,60){\circle*{5}}
\put(200,60){\circle*{5}} 
\put(220,60){\circle*{5}}

\put(0,0){\line(1,3){20}} 
\put(20,0){\line(1,3){20}}
\put(40,0){\line(2,3){40}} 
\put(60,0){\line(-1,1){60}}
\put(80,0){\line(2,3){40}} 
\put(100,0){\line(-2,3){40}}
\put(120,0){\line(2,3){40}} 
\put(140,0){\line(-2,3){40}}
\put(160,0){\line(1,1){60}} 
\put(180,0){\line(-2,3){40}}
\put(200,0){\line(0,1){60}}
\put(220,0){\line(-2,3){40}} 
\put(-3,-11){$2$} 
\put(17,-11){$3$}
\put(-3,64){$1$}
\put(17,64){$2$}
\put(37,-11){$5$} 
\put(57,-11){$1$}
\put(37,64){$3$}
\put(57,64){$4$}
\put(77,-11){$7$} 
\put(97,-11){$4$}
\put(77,64){$5$}
\put(97,64){$6$}

\put(117,-11){$9$} 
\put(137,-11){$6$}
\put(117,64){$7$}
\put(137,64){$8$}
\put(157,-11){$12$} 
\put(177,-11){$8$}
\put(197,-11){$11$} 
\put(217,-11){$10$}
\put(157,64){$9$}
\put(177,64){$10$}
\put(197,64){$11$} 
\put(217,64){$12$}
\end{picture}
\end{center}
\caption{The diagram representing the permutation $(2,3,5,1,7,4,9,6,12,8,11,10)$.} 
\label{fig:pi-12}
\end{figure}
\end{itemize}

The permutation graph $G_{\pi}$ of a permutation $\pi$ is the intersection graph of the digram representing $\pi$.
Figure~\ref{fig:pi} gives an example of a permutation and its permutation graph.

The composition $\mu\circ\rho$ of two permutations $\mu$ and $\rho$ is a permutation $\pi$  such that
$\pi(i)=\mu(\rho(i))$. The inverse of a permutation $\pi$ is a permutation  $\pi^{-1}$ such that 
$\pi^{-1}(\pi(i))=i$.

Let $\pi$ and $\rho$ be two permutations given in one-line notation. We say that $\pi$ is contained in $\rho$ 
if $\rho$ has a subsequence which is order-isomorphic to $\pi$. (Two sequences $(a_1,\ldots,a_n)$ and $(b_1,\ldots,b_n)$ are order-isomorphic if $a_i\le a_j$ if and only if $b_i\le b_j$.) It is not difficult to see from the diagram representations that if $G_{\pi}$ is not an induced subgraph of $G_{\rho}$, 
then $\pi$ is not contained in $\rho$. 

\subsection{The class of $(P_7,Sun_4)$-free bipartite graphs is not WQO}
\label{sec:notWQO}
We start by introducing a special class of bipartite graphs defined as follows:

\begin{dfn} For each permutation $\pi:=\pi_n$ on $[n]$, the graph $T:=T_{\pi}$
is a bipartite graph with parts $A\cup C$ and $B\cup D$, where:
\begin{enumerate} 
\item The vertex set of $T$ is the disjoint union of four independent vertex sets 
\begin{itemize}
\item $A := \{a_1, a_2, \ldots , a_n \}$,
\item $B := \{b_1, b_2, \ldots , b_n \}$, 
\item $C := \{c_1, c_2, \ldots , c_n \}$,
\item $D := \{d_1, d_2, \ldots , d_n \}$. 
\end{itemize}
\item $X(T) := T[A\cup B]$ is a 1-regular graph with $e_i := a_i b_{\pi(i)}$ being an edge for each $i\in [n]$.
\item $Y(T) := T[C\cup D]$ is a biclique (i.e., a complete bipartite graph).
\item Each of $Z'(T):= T[A\cup D]$ and $Z''(T) := T[B\cup C]$ is a $2K_2$-free 
bipartite graph defined as follows: for $i=1,2,\ldots,n$,
\begin{itemize}
\item $N_{Z'}(a_i)=\{d_1,\ldots,d_i\}$,
\item $N_{Z''}(b_i)=\{c_1,\ldots,c_i\}$.
\end{itemize}
\end{enumerate} 
Any graph of the form $T_{\pi}$ will be called a $T$-graph.
\end{dfn}

In order to derive the main result of this section, we will show that every $T$-graph is $(P_7,Sun_4)$-free
and that the set of $T$-graphs is not well-quasi-ordered by induced subgraphs. In fact, we will prove a 
slightly stronger result: we will show that every $T$-graph is $(2P_3,Sun_4)$-free,
where $2P_3$ is the graph obtained from $P_7$ by deleting the central vertex. 

\begin{lem}\label{easy} 
Any $T$-graph is $(2P_3,Sun_4)$-free. 
\end{lem}

\begin{proof} Suppose, for contradiction, that  $T:=T_{\pi}$ contains an induced $2P_3$. Then it is easy 
to check that each of the two $P_3$ must contain at least one vertex in each of $X(T)$ 
and $Y(T)$. Note that the vertices in $2P_3\cap Y(T)$ must all belong to the same part 
of the biclique $Y(T)$. We may assume without loss of generality that this part is $D$. 
It is clear that each $P_3$ has an edge from $A$ to $D$. But then $Z'(T)$ is not $2K_2$-free,
a contradiction showing that $T$ is $2P_3$-free. 

Now suppose, for contradiction, that $T$ contains an induced $Sun_4$.
Note that any two vertices in the same part of $Y(T)$ have nested 
neighborhoods. Therefore, no two vertices of degree 3 in the $Sun_4$
can belong to the same part of $Y(T)$. This implies that no two vertices 
of degree 3 in the $Sun_4$ can belong to the same part of $X(T)$.
Therefore, each of $A, B, C$ and $D$ must contain exactly one vertex of 
degree 3 in the $Sun_4$. Suppose that these vertices are $a, b, c$ and $d$, 
respectively. The leaf attached to $a$ in the $Sun_4$ cannot belong to $B$
(since otherwise $a$ has degree more than 1 in $X(T)$) and cannot belong 
to $D$ (since otherwise $Y(T)$ is not a biclique). This contradiction shows
that $T$ is $Sun_4$-free. 
\end{proof}

\medskip
Now we turn to showing that the set of $T$-graphs is not well-quasi-ordered by the induced subgraph relation.
To this end, for each even $n\ge 6$ we define a specific permutation $\pi_n^*$, as follows:
$$
\pi_n^* := (4,2,\ldots, 2j, 2j-5, \ldots, n-1,n-3) \ \  j=3,\ldots,n/2.
$$
For instance, $\pi_6^*=(4,2,6,1,5,3)$ and $\pi_8^*=(4,2,6,1,8,3,7,5)$.
For $n=10$, we use the diagram to represent $\pi_n^*$ (see Figure~\ref{fig:pi} (left)).
This diagram can also be interpreted as the subgraph $X(T)$ of $T_{\pi_{10}^*}$, which
can be seen by labeling the vertices in the upper part of the diagram by $a_1,\ldots,a_{10}$ consecutively from left to right
and the vertices in the lower part of the diagram by $b_1,\ldots,b_{10}$ consecutively from left to right.
The permutation graph $G_{\pi_{10}^*}$ of the permutation $\pi_{10}^*$ is represented in Figure~\ref{fig:pi} (right).

\begin{figure}[ht]
\begin{center}
\begin{picture}(220,90)
\put(0,0){\circle*{5}} 
\put(20,0){\circle*{5}}
\put(40,0){\circle*{5}} 
\put(60,0){\circle*{5}}
\put(80,0){\circle*{5}} 
\put(100,0){\circle*{5}}
\put(120,0){\circle*{5}} 
\put(140,0){\circle*{5}}
\put(160,0){\circle*{5}} 
\put(180,0){\circle*{5}}

\put(0,60){\circle*{5}} 
\put(20,60){\circle*{5}}
\put(40,60){\circle*{5}} 
\put(60,60){\circle*{5}}
\put(80,60){\circle*{5}} 
\put(100,60){\circle*{5}}
\put(120,60){\circle*{5}} 
\put(140,60){\circle*{5}}
\put(160,60){\circle*{5}} 
\put(180,60){\circle*{5}}

\put(0,0){\line(1,1){60}} 
\put(20,0){\line(0,1){60}}
\put(40,0){\line(1,1){60}} 
\put(60,0){\line(-1,1){60}}
\put(80,0){\line(1,1){60}} 
\put(100,0){\line(-1,1){60}}
\put(120,0){\line(1,1){60}} 
\put(140,0){\line(-1,1){60}}
\put(160,0){\line(0,1){60}} 
\put(180,0){\line(-1,1){60}} 
\put(-3,-11){$4$} 
\put(17,-11){$2$}
\put(-3,64){$1$}
\put(17,64){$2$}
\put(37,-11){$6$} 
\put(57,-11){$1$}
\put(37,64){$3$}
\put(57,64){$4$}
\put(77,-11){$8$} 
\put(97,-11){$3$}
\put(77,64){$5$}
\put(97,64){$6$}

\put(117,-11){$10$} 
\put(137,-11){$5$}
\put(117,64){$7$}
\put(137,64){$8$}
\put(157,-11){$9$} 
\put(177,-11){$7$}
\put(157,64){$9$}
\put(177,64){$10$}
\end{picture}
\begin{picture}(130,70)
\setlength{\unitlength}{0.6mm}
\put(10,20){\circle*{3}} 
\put(20,10){\circle*{3}}
\put(40,10){\circle*{3}} 
\put(60,10){\circle*{3}}
\put(80,10){\circle*{3}} 
\put(90,20){\circle*{3}} 
\put(20,30){\circle*{3}}
\put(40,30){\circle*{3}} 
\put(60,30){\circle*{3}}
\put(80,30){\circle*{3}} 

\put(20,30){\line(1,0){20}} 
\put(20,30){\line(0,-1){20}}
\put(20,30){\line(-1,-1){10}} 
\put(20,10){\line(-1,1){10}}
\put(40,10){\line(1,0){20}} 
\put(40,10){\line(-1,0){20}}
\put(40,10){\line(0,1){20}} 
\put(60,30){\line(-1,0){20}}
\put(60,30){\line(1,0){20}}
\put(60,30){\line(0,-1){20}} 
\put(80,10){\line(-1,0){20}} 
\put(80,10){\line(1,1){10}}
\put(80,10){\line(0,1){20}} 
\put(80,30){\line(1,-1){10}}

\put(19,3){$4$} 
\put(19,32){$1$}
\put(5,18){$2$}
\put(39,3){$3$}
\put(39,32){$6$} 
\put(59,3){$8$}
\put(59,32){$5$}
\put(79,3){$7$}
\put(79,32){$10$} 
\put(92,18){$9$}

\end{picture}
\end{center}
\caption{The permutation $\pi_{10}^*$ (left) and the permutation graph $G_{\pi_{10}^*}$ (right)} 
\label{fig:pi}
\end{figure}
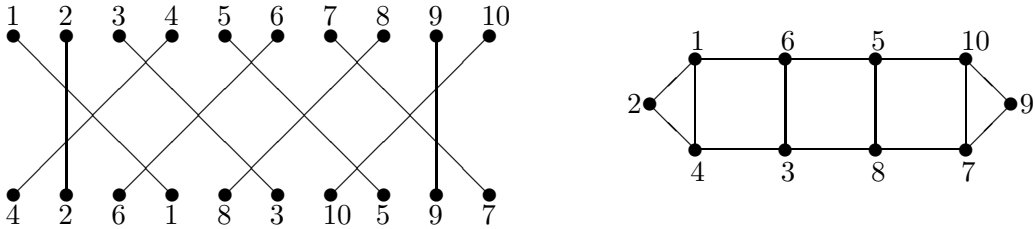

The important fact about the permutations $\pi_n^*$ is that
\begin{claim}\label{perm}
The sequence $\pi_6^*,\pi_8^*,\pi_{10}^*\ldots$ is an antichain of permutations with respect 
to the containment relation.  
\end{claim}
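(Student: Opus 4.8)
The plan is to route the problem through the permutation graphs $G_{\pi_n^*}$ and separate them by a rigidity argument combining triangle-counting with the monotonicity of distance under induced subgraphs. By the observation recorded just before the claim, $\pi_m^*\subseteq\pi_n^*$ would force $G_{\pi_m^*}$ to be an induced subgraph of $G_{\pi_n^*}$; and since a longer permutation can never be contained in a shorter one, it suffices to rule out $G_{\pi_m^*}\le_{\mathrm{ind}}G_{\pi_n^*}$ for all even $m<n$. So the first step is to determine $G_{\pi_n^*}$ explicitly. Here I would exploit that $\pi_n^*$ is an involution whose points $(i,\pi_n^*(i))$ lie on just three lines: two diagonal points $(2,2)$ and $(n-1,n-1)$, an increasing run $u_1,\dots,u_M$ on $y=x+3$, and an increasing run $l_1,\dots,l_M$ on $y=x-3$, where $M=(n-2)/2$. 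Using that two elements $i,j$ are adjacent in $G_\pi$ precisely when $(i-j)(\pi(i)-\pi(j))<0$, a one-line computation gives $u_i\sim l_j\iff|i-j|\le 1$, while $U=\{u_k\}$ and $L=\{l_k\}$ are independent, $d_1:=(2,2)$ is adjacent only to $u_1,l_1$, and $d_2:=(n-1,n-1)$ only to $u_M,l_M$.

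The second step is to read off two rigid features of this graph. Because $U$ and $L$ are independent, every triangle must contain $d_1$ or $d_2$; as each $d_i$ has exactly two neighbours and these happen to be adjacent, $G_{\pi_n^*}$ has exactly two triangles, $\{d_1,u_1,l_1\}$ and $\{d_2,u_M,l_M\}$, and these are vertex-disjoint once $M\ge 2$. A second short computation—each edge along the $U$--$L$ band changes the index by at most one—shows that the distance in $G_{\pi_n^*}$ between these two triangles (the minimum distance from a vertex of one to a vertex of the other) equals $M-1$, realised by a shortest path from $u_1$ to the index-$M$ vertex of the appropriate parity (both $u_M$ and $l_M$ lie in the second triangle, so the parity always works out).

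With these two facts the separation is immediate. Suppose $G_{\pi_m^*}\le_{\mathrm{ind}}G_{\pi_n^*}$ for some even $6\le m<n$, and put $M'=(m-2)/2<M$. An induced embedding preserves edges, so it sends each of the two disjoint triangles of $G_{\pi_m^*}$ to a triangle of $G_{\pi_n^*}$; since the host has exactly the two disjoint triangles found above, these are exactly the images. Now choose $a,b$ in the two triangles of $G_{\pi_m^*}$ realising their triangle-to-triangle distance $M'-1$. As an induced subgraph can only shorten distances, the distance between the images $f(a),f(b)$ in $G_{\pi_n^*}$ is at most $M'-1$; but $f(a),f(b)$ lie in the two triangles of $G_{\pi_n^*}$, whose distance is $M-1$. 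Hence $M-1\le M'-1$, i.e.\ $n\le m$, a contradiction. Therefore $\pi_m^*\not\subseteq\pi_n^*$ for $m<n$, and the sequence is an antichain.

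The routine but unavoidable obstacle is the explicit determination of $G_{\pi_n^*}$ in the first step: one must confirm the adjacency rule $u_i\sim l_j\iff|i-j|\le 1$, the isolation of the apex vertices $d_1,d_2$, and the small parity check in the distance count. The genuinely load-bearing idea is pairing triangle-counting (which pins the two ends of any embedding to the two ends of the host) with distance-monotonicity (which forbids squeezing a long $U$--$L$ band inside a short one); I expect this combination to be the step most in need of a careful statement, with everything else being bookkeeping. A purely permutation-theoretic version is possible—tracking the two diagonal points and the lengths of the two runs directly—but passing through $G_{\pi_n^*}$ keeps the distance argument transparent.
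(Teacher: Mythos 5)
Your proposal is correct and follows the same route as the paper: both reduce the permutation antichain to showing that no $G_{\pi_m^*}$ is an induced subgraph of $G_{\pi_n^*}$ for $m\ne n$, which the paper simply asserts ``can be easily seen.'' Your triangle-counting plus distance-monotonicity argument (with the verified adjacency rule $u_i\sim l_j\iff|i-j|\le 1$ and the two apex vertices) is a correct and complete justification of exactly that asserted fact, so it supplies the detail the paper omits rather than a genuinely different approach.
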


This claim follows directly from the fact that no graph $G_{\pi_n^*}$ is an induced subgraph of 
$G_{\pi_m^*}$ with $n\ne m$, which can be easily seen. We now use Claim~\ref{perm} in order to prove the following result.

\begin{lem}\label{main}
The sequence $T_{\pi_6^*},T_{\pi_8^*},T_{\pi_{10}^*},\ldots$ is an antichain with respect to 
the induced subgraph relation.  
\end{lem}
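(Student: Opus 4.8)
The plan is a proof by contradiction that reduces everything to Claim~\ref{perm}. Suppose $T_{\pi_n^*}$ is an induced subgraph of $T_{\pi_m^*}$ for some $n\neq m$. Since $T_\pi$ has exactly $4|\pi|$ vertices, an induced embedding forces $n\le m$, so I only need to treat the case $n<m$. Write $\pi:=\pi_n^*$, $\rho:=\pi_m^*$, fix an induced embedding $\phi\colon T_\pi\to T_\rho$, and denote by $A,B,C,D$ and $A',B',C',D'$ the four classes of the source and the target. The goal is to extract from $\phi$ strictly increasing injections $f,h\colon[n]\to[m]$ with $h(\pi(i))=\rho(f(i))$ for every $i$; such maps exhibit the subsequence of $\rho$ at positions $f(1)<\dots<f(n)$ as order-isomorphic to $\pi$, i.e.\ $\pi$ is contained in $\rho$, which contradicts Claim~\ref{perm}.

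The main work, and what I expect to be the chief obstacle, is to show that $\phi$ sends each class into the corresponding class in an order-preserving way, say $\phi(a_i)=a'_{f(i)}$, $\phi(b_j)=b'_{h(j)}$, $\phi(d_l)=d'_{g(l)}$ and similarly on $C$. One cannot simply track the image of the biclique $Y=C\cup D\cong K_{n,n}$, because $T_\rho$ also contains induced copies of $K_{n,n}$ that straddle the matching part: for instance $\{a'_n,\dots,a'_{2n-1}\}$ is complete to $\{d'_1,\dots,d'_n\}$ as soon as $m\ge 2n-1$. The plan is therefore to exploit the full attachment data of the source rather than the biclique alone. Concretely I would use: (i) the two \emph{dominating} vertices $c_1$ (adjacent to all of $B\cup D$) and $d_1$ (adjacent to all of $A\cup C$), whose images must dominate $\phi(B\cup D)$ and $\phi(A\cup C)$; (ii) the observation that a vertex of the side $A\cup C$ is adjacent to all of $D$ exactly when it lies in $C\cup\{a_n\}$, and symmetrically on the other side, so that $C$ and $D$ are recovered up to the single ambiguous vertices $a_n$ and $b_n$; and (iii) the nested-neighbourhood ($2K_2$-free) structure of $Z'=T[A\cup D]$ and $Z''=T[B\cup C]$, which linearly orders $A$ and $B$ and also separates $a_n$ from $c_n$ (resp.\ $b_n$ from $d_n$). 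Assembling these so as to rule out every degenerate image will require a short case analysis, and this is the delicate heart of the argument.

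Granting the class-respecting, order-preserving maps, the remaining steps are routine. Preserving the adjacency $a_i\sim d_l\iff l\le i$ under $\phi$ forces $l\le i\iff g(l)\le f(i)$, from which a one-line comparison of consecutive indices shows that $f$ and $g$ are strictly increasing; the symmetric argument on $Z''$ makes $h$ and the $C$-map strictly increasing. Since $b_{\pi(i)}$ is the unique neighbour of $a_i$ in $B$, its image is the unique neighbour of $a'_{f(i)}$ in $B'$, namely $b'_{\rho(f(i))}$; hence $\phi(b_{\pi(i)})=b'_{\rho(f(i))}$, that is, $h(\pi(i))=\rho(f(i))$ for all $i\in[n]$. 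As $h$ is increasing, $(\rho(f(1)),\dots,\rho(f(n)))=(h(\pi(1)),\dots,h(\pi(n)))$ is order-isomorphic to $(\pi(1),\dots,\pi(n))$, so $\pi$ is contained in $\rho$.

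This contradicts Claim~\ref{perm}, which asserts precisely that $\pi_n^*$ is not contained in $\pi_m^*$ for $n\neq m$. Together with the trivial case $n>m$ (excluded by vertex count), this shows that no $T_{\pi_n^*}$ is an induced subgraph of a distinct $T_{\pi_m^*}$, so the sequence is an antichain. The only genuinely hard ingredient is the class-identification step; everything else is bookkeeping resting on the adjacency rules of the definition and on Claim~\ref{perm}.
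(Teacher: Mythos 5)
Your overall strategy coincides with the paper's: show that any induced embedding of one $T$-graph into another must respect the four classes $A,B,C,D$ in an order-preserving way, and then read off a containment of permutations contradicting Claim~\ref{perm}. Your endgame is correct and matches what the paper does after its Claim~\ref{eq:inclusion}: the monotonicity of the index maps follows from the nested neighbourhoods in $Z'$ and $Z''$, and the identity $h(\pi(i))=\rho(f(i))$ follows from the $1$-regularity of $X$.

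However, there is a genuine gap: the class-identification step, which you yourself flag as ``the delicate heart of the argument'' and defer to ``a short case analysis,'' is in fact the entire substance of the paper's proof, and it is not short. The paper needs a chain of claims (Claims~\ref{ineq}, \ref{claim1}, \ref{claim2}) showing successively that $|A(H)\cap C(G)|\le 1$ and its analogues, then that $|X(H)\cap Y(G)|\le 1$ and $|Y(H)\cap X(G)|\le 1$, and finally that these intersections are empty; and these arguments use not only the generic structure you list in (i)--(iii) but also specific features of the permutations $\pi^*_m$ --- for instance, a vertex $a_i$ adjacent to all but one vertex of $D(H)$ forces $i\in\{m-1,m\}$, and the fact that $\pi_m^*$ sends these indices to $m-1$ and $m-3$ is what produces the forbidden $2K_2$'s in $Z''(G)$. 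Your observation (i) about the dominating vertices $c_1,d_1$ does not immediately help, because their images only need to dominate the image of $H$, not the whole of the corresponding side of $G$, so they are not thereby pinned to $C(G)$ or $D(G)$; similarly, observation (ii) identifies $C\cup\{a_n\}$ inside the source graph but says nothing a priori about where those vertices land in the target. A smaller omission: an embedding could a priori swap the two sides of the bipartition (sending $A(H)\cup C(H)$ into $B(G)\cup D(G)$); the paper disposes of this using the fact that $\pi_n^*$ is an involution, so the two sides of $T_{\pi_n^*}$ play symmetric roles, whereas your write-up assumes the sides match without comment. Until the case analysis is actually carried out, what you have is a plan rather than a proof.
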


\begin{proof}
Suppose by contradiction that there is a graph $H:=T_{\pi_m^*}$ which is an induced subgraph of
a graph $G:=T_{\pi_n^*}$ for some even $6\le m<n$. We fix an arbitrary embedding of $H$ into 
$G$, i.e., we assume that $V(H)\subset V(G)$. We will denote the vertex subsets $A,B,C,D$ of 
the graph $H$ by $A(H), B(H), C(H), D(H)$ and of the graph $G$ by $A(G), B(G), C(G), D(G)$. 
Since both graphs are connected bipartite and $\pi_n = \pi_n^{-1}$, then in both graphs the role of the parts $A\cup C$ and $B\cup D$ 
is symmetric, so we may assume that

\begin{claim}\label{simple} 
$A(H)\cup C(H) \subseteq A(G)\cup C(G)$ and $B(H)\cup D(H) \subseteq B(G)\cup D(G)$. 
\end{claim}
Keeping Claim~\ref{simple} in mind, we derive a series of conclusions. First,   
we show that

\begin{claim}\label{ineq} 
$|A(H)\cap C(G)| \leq 1$, $|B(H)\cap D(G)| \leq 1$, 
$|C(H)\cap A(G)| \leq 1$, $|D(H)\cap B(G)| \leq 1$.
\end{claim}

\begin{proof}
Suppose $|A(H)\cap C(G)| \geq 2$, and pick two distinct vertices $a_i, a_j\in A(H)$ 
that belong to $C(G)$. Let $\pi := \pi_m^*$.
Since $Y(G)$ is a biclique, both $b_{\pi(i)}$ and $b_{\pi(j)}$ must lie in $B(G)$,
which contradicts the $2K_2$-freeness of $Z''(G)$. 
Thus $|A(H)\cap C(G)| \leq 1$. The second inequality follows by symmetry.

Suppose $c_i,c_j\in C(H)\cap A(G)$ ($i<j$). Since $b_j\in B(H)$ is
adjacent to both $c_i$ and $c_j$, we have $b_j\in D(G)$. Since
$|B(H)\cap D(G)|\le 1$, $b_i\in B(G)$.  Then $a_{\pi^{-1}(i)}$ is
adjacent to $b_i$ but $b_i$ has only one neighbor $c_i$ in $A(G)$, and
therefore $a_{\pi^{-1}(i)}\in C(G)$. Then $a_{\pi^{-1}(i)}\in C(G)$ is adjacent
to $b_j\in D(G)$, a contradiction.  This proves that $|C(H)\cap A(G)|\le
1$. The fourth inequality follows by symmetry.

\end{proof}

\medskip
Now we prove that

\begin{claim}\label{claim1} 
$|X(H)\cap Y(G)| \leq 1$ and $|Y(H)\cap X(G)| \leq 1$. 
\end{claim}

\begin{proof} 
By Claim~\ref{ineq} and the definition of $Y(G)$, if the intersection $X(H)\cap Y(G)$ contains two vertices, 
then these vertices must be adjacent. 
Let $\pi := \pi_m^*$ and suppose an edge $a_i b_{\pi(i)}$ of $X(H)$ belongs to $Y(G)$. 
By Claim~\ref{ineq}, $|D(H)\cap B(G)| \leq 1$, which means that $a_i$ is adjacent 
to all but at most one vertex of $D(H)$. According to the definition of $H$, we 
conclude that $i\in\{m-1,m\}$. Similarly, $b_{\pi(i)}$ is adjacent to all but at most one vertex of $C(H)$,
implying that $\pi(i)\in\{m-1,m\}$. Together $i\in\{m-1,m\}$ and $\pi(i)\in\{m-1,m\}$ imply $i=\pi(i)=m-1$.
From this and Claim~\ref{ineq} we conclude that both $a_m\in A(H)$ and $b_m\in B(H)$ belong to $X(G)$. Also, since 
\begin{itemize}
\item $a_{m-1}\in A(H)$ belongs to $Y(G)$,
\item $a_{m-1}$ is not adjacent to $d_m\in D(H)$ in $H$ and 
\item $Y(G)$ is a biclique, 
\end{itemize}
we conclude that $d_m\in D(H)$ belongs to $X(G)$. 
Similarly, $c_m\in C(H)$ belongs to $X(G)$. This contradicts the 1-regularity of $X(G)$, since $c_m\in A(G)$ is adjacent to both
$b_m$ and $d_m$ in $B(G)$. Thus $|X(H)\cap Y(G)| \leq 1$.

The prove the second inequality, suppose there is an edge $cd$ of $Y(H)$ belonging to $X(G)$. 
By definition, vertex $c\in C(H)$ must have a neighbor in $B(H)$, and due to 1-regularity of 
$X(G)$ this neighbor must belong to $D(G)$. Similarly, $d$ must have a neighbor in $A(H)\cap C(G)$. 
But this contradicts $|X(H)\cap Y(G)| \leq 1$. 
\end{proof}

\medskip
Next, we show that

\begin{claim}\label{claim2} 
$X(H)\cap Y(G) = Y(H)\cap X(G) = \emptyset$. 
\end{claim}

\begin{proof}
Assume first that $X(H)\cap Y(G)$ is not empty, and suppose without loss of 
generality that a vertex $a_i$ of $A(H)$ belongs to $Y(G)$. Then by Claim~\ref{claim1}
all vertices of $B(H)$ belong to $X(G)$. By Claim~\ref{ineq}, $|D(H)\cap B(G)| \leq 1$, 
which means that $a_i$ is adjacent to all but at most one vertex of $D(H)$. According 
to the definition of $H$, we conclude that $i=m-1$ or $i=m$. In either case, vertex 
$b_m$ is not adjacent to $a_i$, and the neighborhood of $b_m$ in the graph $Z''(H)$
is strictly greater than the neighborhood of $b_{\pi(i)}$.  

Suppose $i=m$. By Claim~\ref{ineq}, at least one of 
$c_{m-1},c_m\in C(H)$ belongs to $C(G)$, say $c_m\in C(G)$. But then $a_m,b_{m-3},b_m,c_m$
induce a $2K_2$, contradicting the $2K_2$-freeness of $Z''(G)$.

Suppose now that $i=m-1$. By definition, the vertex $a_{m-1}$ of $A(H)$ has a non-neighbor in $D(H)$.
Therefore, the set $D(H)$ must have a vertex in $X(G)$. This implies by Claim~\ref{claim1} that $C(H)\subset C(G)$,
and hence the vertices $a_{m-1},b_{m-1},c_m,b_m$ induce a $2K_2$, contradicting
the $2K_2$-freeness of $Z''(G)$. This completes the proof of the fact that 
$X(H)\cap Y(G) = \emptyset$.

\medskip
Now assume that $Y(H)\cap X(G) \not = \emptyset$ and suppose without loss of generality 
that a vertex $d_i$ of $D(H)$ belongs to $X(G)$. Since $X(H)\cap Y(G) = \emptyset$, the vertex $a_i\in A(H)$ lies in $A(G)$ and has two neighbors $b_{\pi(i)}$ and $d_i$ in
$B(G)$, a contradiction. Therefore, $Y(H)\cap X(G) = \emptyset$.   
\end{proof}

\medskip
Claims~\ref{claim2} and Claim~\ref{simple} together imply the following conclusion.

\begin{claim}\label{eq:inclusion}
$A(H) \subseteq A(G),\ B(H) \subseteq B(G),\ C(H) \subseteq C(G)$ and $D(H) \subseteq D(G)$.
\end{claim}

Assuming that $H$ is an induced subgraph of $G$, we must conclude that the ordering of vertices of $A(H)$
respects the ordering of vertices of $A(G)$, and similarly, the ordering of vertices of $B(H)$
respects the ordering of vertices of $B(G)$. But then we must conclude that $\pi_m^*$ is contained in
$\pi_n^*$ which is a contradiction to Claim~\ref{perm}. This contradiction completes the proof of the lemma.  
\end{proof}

\medskip
Lemmas~\ref{easy} and \ref{main} imply the main result of this section.

\begin{thm} 
The class of $(P_7, Sun_4)$-free bipartite graphs is not well-quasi-ordered by the induced subgraph relation. 
\end{thm}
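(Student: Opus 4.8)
The plan is to combine the two preceding lemmas directly, since together they already carry essentially all the content of the theorem. By Lemma~\ref{main}, the sequence $T_{\pi_6^*}, T_{\pi_8^*}, T_{\pi_{10}^*}, \ldots$ is an infinite antichain with respect to the induced subgraph relation. By Lemma~\ref{easy}, every member of this sequence is $(2P_3, Sun_4)$-free. So the only bookkeeping step that remains is to upgrade $2P_3$-freeness to $P_7$-freeness.

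The key observation I would make is that $2P_3$ is an induced subgraph of $P_7$; indeed, as remarked just before Lemma~\ref{easy}, $2P_3$ is obtained from $P_7$ by deleting its central vertex. The logical direction matters here: from ``$2P_3$ is an induced subgraph of $P_7$'' one concludes that any graph containing an induced $P_7$ also contains an induced $2P_3$ (simply delete the image of the central vertex), and hence, contrapositively, that every $2P_3$-free graph is $P_7$-free. Applying this to each $T$-graph, I conclude that every $T_{\pi_n^*}$ is not merely $(2P_3,Sun_4)$-free but in fact $(P_7, Sun_4)$-free.

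Having established this, the entire sequence $T_{\pi_6^*}, T_{\pi_8^*}, T_{\pi_{10}^*}, \ldots$ is an infinite antichain lying wholly inside the class of $(P_7, Sun_4)$-free bipartite graphs. By the definition of well-quasi-order given in the Introduction, a quasi-order that contains an infinite antichain is not a well-quasi-order, independently of the behaviour of decreasing sequences. Therefore the class of $(P_7, Sun_4)$-free bipartite graphs is not well-quasi-ordered by the induced subgraph relation, as claimed.

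I do not anticipate a genuine obstacle: the two lemmas have already done the work, and the proof is a short deduction. The only points demanding care are getting the inclusion $2P_3 \le P_7$ the right way round so that $2P_3$-freeness yields $P_7$-freeness, and recalling that a single infinite antichain alone already defeats the wqo property.
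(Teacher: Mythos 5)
Your proposal is correct and follows exactly the paper's route: the paper likewise derives the theorem immediately from Lemmas~\ref{easy} and \ref{main}, with the same implicit observation that $2P_3$-freeness implies $P_7$-freeness since $2P_3$ is an induced subgraph of $P_7$. Your explicit spelling-out of that direction and of the fact that one infinite antichain suffices is exactly the intended (and omitted) bookkeeping.
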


\subsection{The class of $(P_8, \widetilde{P}_8)$-free biconvex graphs is not WQO}
\label{sec:biconnotWQO}

A bipartite graph is \emph{biconvex} if the vertices of the graph can be 
linearly ordered so that the neighborhood of each vertex forms an interval, 
i.e., the neighborhood consists of consecutive vertices in the order. \cite{Tucker}
Strengthening the result from \cite{Ding92}, we show in this section that 
the class of $(P_8, \widetilde{P}_8)$-free {\it biconvex} graphs is not wqo by 
the induced subgraph relation. We start by introducing two special types 
of permutations.

\begin{dfn}
A permutation $\pi_n$ is {\it convex} if for any $1\le i \le n$ 
the set $\pi_n^{-1}(\{i,i+1,\ldots,n-1,n\})$ forms an interval, i.e., the
elements of the set occupy consecutive positions in the permutation.
\end{dfn} 

For instance, the permutation $\rho=(1,2,3,5,7,9,10,8,6,4)$ is convex. Indeed, 
the elements of the set $\{5,6,7,8,9,10\}$ occupy positions $4,5,6,7,8,9$, 
the elements of the set $\{6,7,8,9,10\}$ occupy positions $5,6,7,8,9$, and the 
same is true for any other set of the form $\{i,i+1,\ldots,n-1,n\}$. 
The permutation $\mu=(2,3,5,7,10,9,8,6,4,1)$ is another example of a convex 
permutation.

\begin{dfn}
A permutation $\pi$ is {\it biconvex} if there are two convex permutations
$\mu$ and $\rho$ such that $\pi=\mu\circ \rho^{-1} $.
\end{dfn} 

To give an example, consider the following permutation:
$\pi=(2,3,5,1,7,4,10,6,9,8)$. It is not difficult to verify that 
$\pi=\mu\circ \rho^{-1}$, where $\mu$ and $\rho$ are the two convex
permutations given above. For instance, $\pi(1)=\mu(\rho^{-1}(1))=2$, 
$\pi(2)=\mu(\rho^{-1}(2))=3$, $\pi(3)=\mu(\rho^{-1}(3))=5$, etc.
Therefore, $\pi$ is a biconvex permutation. 

\medskip
By $\pi[\mu,\rho]$ we shall denote a biconvex permutation $\pi$ 
given together with a pair of convex permutations  $\mu$ and $\rho$ such
that $\pi=\mu\circ \rho^{-1}$. 
Now we introduce a special class of bipartite graphs defined as follows:

\begin{dfn} For a biconvex permutation $\pi:=\pi_n[\mu,\rho]$, 
the graph $S:=S_{\pi}$ is a bipartite graph with parts $A\cup C$ and $B$, where:
\begin{enumerate} 
\item $V(S)$ is the disjoint union of three independent vertex sets 
\begin{itemize}
\item $A := \{a_1, a_2, \ldots , a_n \}$,
\item $B := \{b_1, b_2, \ldots , b_n \}$, 
\item $C := \{c_1, c_2, \ldots , c_n \}$,
\end{itemize}

\item Each of $X(S):= S[A\cup B]$ and $Y(S) := S[B\cup C]$ is a $2K_2$-free 
bipartite graph defined as follows: for $i=1,2,\ldots,n$,
\begin{itemize}
\item $N_{X}(b_i)=\{a_1,\ldots,a_{\rho(i)}\}$,
\item $N_{Y}(b_i)=\{c_1,\ldots,c_{\mu(i)}\}$.
\end{itemize}
\end{enumerate} 
Any graph of the form $S_{\pi}$ will be called an $S$-graph.
\end{dfn}

\begin{rem}
$N_G(a_i)=\{b_{\rho^{-1}(i)},\ldots,b_{\rho^{-1}(n)}\}$
and
$N_G(c_i)=\{b_{\mu^{-1}(i)},\ldots,b_{\mu^{-1}(n)}\}$.
\end{rem}

\begin{claim}\label{easyclaim} 
Any $S$-graph is a $(P_8, \widetilde{P}_8)$-free biconvex graph. 
\end{claim}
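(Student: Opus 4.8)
The claim bundles three assertions: every $S$-graph is biconvex, is $P_8$-free, and is $\widetilde{P}_8$-free. My plan is to handle biconvexity by writing down explicit orderings of the two parts, and then to obtain both $P_8$- and $\widetilde{P}_8$-freeness from a single structural lemma about nested neighbourhoods, applied once to $S$ and once to its bipartite complement.

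For biconvexity I would order the part $B$ by $b_1,b_2,\ldots,b_n$ and the part $A\cup C$ by $a_n,a_{n-1},\ldots,a_1,c_1,c_2,\ldots,c_n$. Against the $B$-order, the set $N_S(a_i)=\{b_j:\rho(j)\ge i\}$ is exactly the $b$-index set $\rho^{-1}(\{i,i+1,\ldots,n\})$, which is an interval precisely because $\rho$ is convex; the analogous computation with $\mu$ handles each $N_S(c_i)$. Against the $A\cup C$-order, $N_S(b_i)=\{a_1,\ldots,a_{\rho(i)}\}\cup\{c_1,\ldots,c_{\mu(i)}\}$ occupies the contiguous block $a_{\rho(i)},\ldots,a_1,c_1,\ldots,c_{\mu(i)}$, which is an interval automatically from the prefix structure, with no appeal to convexity. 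This is the only place convexity of $\mu$ and $\rho$ enters.

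The core is $P_8$-freeness, which I would prove from the observation that $\{N_S(a):a\in A\}$ and $\{N_S(c):c\in C\}$ are each chains under inclusion (as $N_S(a_i)=\{b_j:\rho(j)\ge i\}$ shrinks as $i$ grows, and likewise for $C$). I claim that any bipartite graph with parts $L=A\cup C$ and $R=B$ having this nesting property is $P_8$-free, and I would bound the length of an induced path $v_1\cdots v_k$ directly. First, at most two vertices of $A$ lie on the path: if $a_p,a_q,a_s$ occur with $N(a_s)\subseteq N(a_q)\subseteq N(a_p)$, then any path-neighbour $b$ of $a_s$ satisfies $b\in N(a_s)\subseteq N(a_q)\cap N(a_p)$, so $b$ would be adjacent to three path-vertices, which is impossible. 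Second, if exactly two $A$-vertices $a,a'$ occur with $N(a)\subseteq N(a')$, then $a$ is forced to be an endpoint: otherwise its two path-neighbours $b,b'$ both lie in $N(a)\subseteq N(a')$, and $\{a,b,a',b'\}$ induces a $C_4$, which no induced path contains. The symmetric statements hold for $C$. Now the count: the path meets $L$ in at most four vertices, and since it alternates between $L$ and $R$, it has at most $|L\cap P|+1$ vertices in $R$. In the extremal case where $P$ meets $L$ in four vertices (two in $A$, two in $C$), the two inclusion-minimal ones must both be endpoints, hence both in $L$, so $P$ has only three $R$-vertices and seven vertices in all; when $|L\cap P|\le 3$ the bound $|L\cap P|+|R\cap P|\le 3+4=7$ applies as well. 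Thus no induced $P_8$ exists.

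For $\widetilde{P}_8$-freeness I would reuse this lemma verbatim. In $\widetilde S$ one has $N_{\widetilde S}(a_i)=B\setminus N_S(a_i)=\{b_j:\rho(j)\le i-1\}$ and $N_{\widetilde S}(c_i)=\{b_j:\mu(j)\le i-1\}$, so the $A$-neighbourhoods and the $C$-neighbourhoods of $\widetilde S$ are again each nested (now increasing in the index); hence $\widetilde S$ satisfies the hypotheses of the lemma and is itself $P_8$-free. Since $\widetilde{P}_8$ is connected (readily checked, e.g. it admits a spanning path), its bipartition is unique and must align with the bipartition of $S$; therefore an induced $\widetilde{P}_8$ in $S$ would yield, on bipartite-complementing that vertex set, an induced $\widetilde{\widetilde{P}}_8=P_8$ in $\widetilde S$, contradicting the $P_8$-freeness of $\widetilde S$. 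The step I expect to be most delicate is the final count in the $P_8$-free argument: the naive alternation bound only yields nine vertices, and it is exactly the endpoint-forcing observation that collapses the case of two $A$-vertices and two $C$-vertices down to seven. Getting that forcing argument precisely right, rather than settling for the weaker ``at most two per side'' bound, is what makes the construction land on $P_8$ and $\widetilde{P}_8$.
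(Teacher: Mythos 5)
Your proposal is correct and follows essentially the same route as the paper: your ordering of $A\cup C$ (the $A$-vertices increasing and the $C$-vertices decreasing by neighbourhood inclusion, so the largest neighbourhoods meet in the middle) is exactly the paper's biconvexity argument, and your nested-neighbourhood lemma is precisely a proof of the paper's one-line assertion that $(P_8,\widetilde{P}_8)$-freeness follows from the $2K_2$-freeness of $X(S)$ and $Y(S)$. The endpoint-forcing count that brings the bound down from $9$ to $7$, and the passage to $\widetilde{S}$ via the unique bipartition of the connected graph $\widetilde{P}_8$, are details the paper leaves to the reader, and you have supplied them correctly.
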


\begin{proof}
Let $S:=S_{\pi}$ be an $S$-graph associated with a biconvex permutation $\pi:=\pi_n$ 
such that $\pi=\mu\circ \rho^{-1}$, where $\mu$ and $\rho$ are two convex permutations.
The $(P_8, \widetilde{P}_8)$-freeness of $S$ follows from the $2K_2$-freeness of $X(T)$ and $Y(T)$.
Now let us prove that $S$ is biconvex. To this end, we need to show that 
the vertices in each part of the graph can be linearly ordered so that the neighborhood 
of any vertex in the opposite part forms an interval. To achieve this goal we keep the 
natural order of the vertices in the $B$-part, i.e., $B=(b_1,\ldots, b_n)$. 
The vertices of the $A\cup C$-part are ordered under inclusion of their neighborhoods,
increasingly for the $A$-vertices and decreasingly for the $C$-vertices, i.e., 
the vertices with the largest neighborhood in $A$ and $C$ are in the middle of the order.
Now let us show that the defined order is biconvex.

Let $b$ be any vertex from $B$. If $b$ is adjacent to any vertex $a$ from $A$, then 
$b$ is adjacent to any vertex from $A$ with larger neighborhood than $N(a)$, i.e.,
$b$ is adjacent to any vertex of $A$ following $a$. Similarly, 
if $b$ is adjacent to any vertex $c$ from $C$, then 
$b$ is adjacent to any vertex from $C$ with larger neighborhood than $N(c)$,
i.e., $b$ is adjacent to any vertex of $C$ preceding $c$. Therefore,
$N(b)$ is an interval.

Now let $a_i$ be a vertex from $A$. Let $I$ be the interval (i.e., the set of positions) 
of length $n-i+1$ containing the elements $\{i,\ldots,n\}$ of the permutation $\rho$.  
Then $N(a_i)=\{b_j\ :\ j\in I\}$, i.e., $N(a_i)$ is an interval. Similarly, 
if $c_i$ is a vertex from $C$ and $I$ is the interval of length $n-i+1$ containing 
the elements $\{i,\ldots,n\}$ of the permutation $\mu$, then $N(c_i)=\{b_j\ :\ j\in I\}$, 
i.e., $N(c_i)$ is an interval
\end{proof}

\medskip
Now we define a specific permutation $\pi_n^*$ in the following way: 
for each even $n\ge 8$, 
$$
\pi_n^* := (2,3,5,1\ldots, 2j+3, 2j, \ldots, n,n-4,n-1,n-2) \ \  j=2,\ldots,n/2-4.
$$
For instance, $\pi_8^*=(2,3,5,1,8,4,7,6)$ and $\pi_{10}^*=(2,3,5,1,7,4,10,6,9,8)$. 
The permutation $\pi_{12}^*$ is represented in Figure~\ref{fig:pi-12}.

Let us show that $\pi_n^*$ is a
biconvex permutation. To this end, we define two convex permutations $\rho_n^*$ and 
$\mu_n^*$ in the following way:
$$
\rho_n^* := (1,2,3,5\ldots,\mbox{odd numbers},\ldots,n-3,n-1,n,n-2,\ldots,\mbox{even numbers},\ldots,6,4).
$$
$$
\mu_n^* := (2,3,5\ldots,\mbox{odd numbers},\ldots,n-3,n,n-1,n-2,n-4,\ldots,\mbox{even numbers},\ldots,6,4,1).
$$
It is not difficult to verify that for $n=10$ the permutations $\pi_n^*$, $\rho_n^*$ and 
$\mu_n^*$ coincide with the permutations $\pi$, $\rho$ and $\mu$ defined in the beginning of the section.

\begin{claim} 
$\pi_n^*=\mu_n^*\circ \rho_n^{*-1} $.
\end{claim}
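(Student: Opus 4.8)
The plan is to avoid inverting a permutation by rewriting the identity in composed form. Since $\pi_n^*=\mu_n^*\circ\rho_n^{*-1}$ is equivalent, by composing with $\rho_n^*$ on the right, to $\mu_n^*=\pi_n^*\circ\rho_n^*$, it suffices to verify that $\mu_n^*(i)=\pi_n^*\bigl(\rho_n^*(i)\bigr)$ for every $i\in[n]$ (recall $n$ is even and $n\ge 8$). This replaces the inverse by two forward evaluations, each of which can be read off directly from the one-line descriptions.

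First I would translate the verbal definitions (``$\ldots,$ odd numbers$,\ldots$'') into explicit piecewise formulas in the position $i$. Reading off the given sequences yields
\[
\rho_n^*(i)=\begin{cases} i & \text{if } i\in\{1,2\},\\ 2i-3 & \text{if } 3\le i\le n/2+1,\\ 2n+4-2i & \text{if } n/2+2\le i\le n, \end{cases}
\]
\[
\mu_n^*(i)=\begin{cases} 2 & \text{if } i=1,\\ 2i-1 & \text{if } 2\le i\le n/2-1,\\ n & \text{if } i=n/2,\\ n-1 & \text{if } i=n/2+1,\\ 2n+2-2i & \text{if } n/2+2\le i\le n-1,\\ 1 & \text{if } i=n, \end{cases}
\]
together with the description of $\pi_n^*$: namely $\pi_n^*(1,2,3,4)=(2,3,5,1)$; in the interior $5\le i\le n-4$ one has $\pi_n^*(i)=i+2$ for odd $i$ and $\pi_n^*(i)=i-2$ for even $i$; and $\pi_n^*(n-3,n-2,n-1,n)=(n,n-4,n-1,n-2)$. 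I would sanity-check each formula against the worked values $\rho_{10}^*,\mu_{10}^*,\pi_{10}^*$ from the start of the section and against $\pi_{12}^*$ in Figure~\ref{fig:pi-12}.

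Next I would carry out a case analysis on $i$, tracking where $\rho_n^*(i)$ lands among the four regimes of $\pi_n^*$ (the $4$-element prefix, the odd interior, the even interior, and the $4$-element suffix). The cases $i\in\{1,2\}$ are immediate. For $3\le i\le n/2+1$ the value $v=\rho_n^*(i)=2i-3$ is odd, so $\pi_n^*$ acts by ``$+2$'', giving $\pi_n^*(v)=2i-1=\mu_n^*(i)$ throughout the interior, while the endpoints $i=3$ (prefix, $3\mapsto5$), $i=n/2$ (suffix, $n-3\mapsto n$) and $i=n/2+1$ (suffix, $n-1\mapsto n-1$) each reproduce the special values of $\mu_n^*$. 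For $n/2+2\le i\le n$ the value $v=\rho_n^*(i)=2n+4-2i$ is even, so $\pi_n^*$ acts by ``$-2$'', giving $\pi_n^*(v)=2n+2-2i=\mu_n^*(i)$ in the interior, while the endpoints $i=n/2+2$ (suffix, $n\mapsto n-2$), $i=n/2+3$ (suffix, $n-2\mapsto n-4$) and $i=n$ (prefix, $4\mapsto1$) again match $\mu_n^*$.

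The main obstacle is bookkeeping at the boundaries rather than any real difficulty: the interior identities $\pi_n^*(\rho_n^*(i))=(2i-3)+2=2i-1$ and $\pi_n^*(\rho_n^*(i))=(2n+4-2i)-2=2n+2-2i$ are one-line checks, but one must correctly determine which positions $i$ push $\rho_n^*(i)$ into the prefix $\{1,2,3,4\}$ or the suffix $\{n-3,n-2,n-1,n\}$ of $\pi_n^*$, where the simple $\pm2$ rule fails and the special values intervene. These boundary positions cluster near $i=3$, $i=n/2$, and $i=n$, and it is there that parity and off-by-two slips are easiest to make; I would therefore pin them down by also verifying the small case $n=8$ (where the interior of $\pi_n^*$ is empty, so every case is a boundary case) to confirm the formulas before asserting the general identity.
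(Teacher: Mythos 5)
Your proof is correct and takes essentially the same route as the paper's: a direct case-by-case verification in which the interior positions are handled by the $\pm 2$ parity rule and the handful of boundary positions are checked individually. The only differences are cosmetic --- you verify the equivalent identity $\mu_n^*=\pi_n^*\circ\rho_n^*$ so as to avoid computing $\rho_n^{*-1}$, and you write out explicit piecewise formulas where the paper appeals to ``direct inspection.''
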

\begin{proof}
For small and large values of $i$, one can verify by direct inspection that $\pi_n^*(i)=\mu_n^*(\rho_n^{*-1}(i))$.
Now let $4<i<n-3$. If $i$ is odd then $\pi_n^*(i)=\mu_n^*(\rho_n^{*-1}(i))=i+2$, and if $i$ is even then
$\pi_n^*(i)=\mu_n^*(\rho_n^{*-1}(i))=i-2$. 
\end{proof}

\begin{lem}\label{biconmain}
The sequence $S_{\pi_8^*},S_{\pi_{10}^*},S_{\pi_{12}^*},\ldots$ is an antichain with respect to 
the induced subgraph relation.  
\end{lem}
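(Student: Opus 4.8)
The plan is to transplant the argument of Lemma~\ref{main} to the three-part setting of $S$-graphs. Assume for contradiction that $H:=S_{\pi_m^*}$ is an induced subgraph of $G:=S_{\pi_n^*}$ for some even $8\le m<n$, and fix an embedding $V(H)\subset V(G)$. Both graphs are connected: every vertex $b_i$ is adjacent to $a_1$, and $a_1$ (like $c_1$) is adjacent to all of $B$. Hence the embedding maps the bipartition of $H$ onto that of $G$ up to a possible interchange of the two sides, and the first step is to establish the analogue of Claim~\ref{simple}, namely $B(H)\subseteq B(G)$ and $A(H)\cup C(H)\subseteq A(G)\cup C(G)$.

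Once the orientation is fixed I would run the same sequence of intersection estimates as in Claims~\ref{ineq}--\ref{claim2}. Concretely, using the $2K_2$-freeness of $X(G)$ and $Y(G)$ together with the fact (from the Remark) that within each of $A$, $B$, $C$ the relevant neighborhoods form nested intervals, I would first bound the ``wrong-part'' intersections $|A(H)\cap C(G)|$ and $|C(H)\cap A(G)|$, then show that no edge of $X(H)$ can lie inside $Y(G)$ and vice versa, and finally deduce the clean inclusions $A(H)\subseteq A(G)$, $B(H)\subseteq B(G)$, $C(H)\subseteq C(G)$ (up to the symmetry $A\leftrightarrow C$, which corresponds to replacing $\pi$ by $\pi^{-1}$ via $\pi=\mu\circ\rho^{-1}$). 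With the three parts aligned, the induced-subgraph condition forces the chain orders on $A(H)$, on $B(H)$ and on $C(H)$ to respect the corresponding orders in $G$; reading off the prefix sizes $\rho(i)$ and $\mu(i)$ then shows that the biconvex permutation $\pi_m^*$ is contained in $\pi_n^*$. This contradicts the antichain property of the family $\{\pi_n^*\}$ under pattern containment, which is the analogue of Claim~\ref{perm} and can be read off from the staircase shape of $\pi_n^*$ (since $\pi_n^*$ has length $n>m$ it cannot be a pattern of $\pi_m^*$, so only the non-containment of $\pi_m^*$ in $\pi_n^*$ needs checking).

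The main obstacle, and the genuinely new point compared with Lemma~\ref{main}, is the reduction to the correct orientation. In the $T$-graph case the identity $\pi_n=\pi_n^{-1}$ made the two sides symmetric, so Claim~\ref{simple} was immediate; here $\pi_n^*$ is not an involution and, more importantly, the two sides of an $S$-graph have different sizes ($|A\cup C|=2n$ while $|B|=n$), so the reversed orientation $B(H)\to A(G)\cup C(G)$ is not excluded by symmetry and must be ruled out on structural grounds. I would attack it through the size-and-degree asymmetry between the two sides: the side $A\cup C$ always carries vertices (namely $a_1,c_1$) adjacent to the whole opposite side, whereas a vertex of $B$ adjacent to all of $A\cup C$ would force $\rho(i)=\mu(i)=n$, which is impossible because $\rho_n^{*-1}(n)\neq\mu_n^{*-1}(n)$. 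Turning this asymmetry into an obstruction to the reversed embedding---for instance by tracking where the two common neighbours $a_1(H),c_1(H)$ of the whole of $B(H)$ are forced to go inside $B(G)$---is the delicate part of the proof, after which the remaining estimates are routine adaptations of the $T$-graph argument.
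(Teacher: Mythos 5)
Your proposal has two genuine gaps, both at the places you yourself flag as delicate, and in both places the paper takes a different (and essential) route. First, the orientation step. Your idea of using the vertices $a_1,c_1$ (adjacent to all of $B$) does not close: in the reversed embedding a vertex of $B(G)$ receiving $a_1(H)$ only needs to dominate the \emph{image} of $B(H)$ inside $A(G)\cup C(G)$, not all of $A(G)\cup C(G)$, so the observation that $\rho(i)=\mu(i)=n$ is impossible does not apply. The paper rules out the reversed orientation by a poset argument instead: $A(G)\cup C(G)$ is a union of two chains under neighborhood inclusion, while $B(H)$ contains a three-element antichain under neighborhood inclusion (e.g.\ $b_{m/2},b_{m/2+1},b_{m/2+2}$, using the explicit values of $\rho^*$ and $\mu^*$ there), and incomparability in $H$ is inherited by $G$; a $3$-antichain cannot land in a union of two chains.

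Second, and more seriously, the claim that the inclusions $A(H)\subseteq A(G)$, $C(H)\subseteq C(G)$ (up to the $A\leftrightarrow C$ swap) follow by ``routine adaptations'' of Claims~\ref{ineq}--\ref{claim2} is not justified. Those claims were driven by structural features specific to $T$-graphs --- the biclique $Y(G)$, the $1$-regular $X(G)$, and the particular shape of the Section~2.1 permutation --- none of which exist in an $S$-graph, where $A$ and $C$ sit on the same side of the bipartition and are both chains under neighborhood inclusion, hence locally indistinguishable. The paper avoids separating $A$ from $C$ altogether: once $B(H)\subseteq B(G)$ is known, it passes to the incomparability graph $B'(\cdot)$ of the neighborhood-inclusion order on the $B$-part, observes that $B'(S_{\pi_n^*})$ is exactly the permutation graph $G_{\pi_n^*}$ of Figure~\ref{fig:pn} (an $H_i$-type graph), that $B'(H)$ must embed as a subgraph of $B'(G)$, and that these graphs form an antichain under the subgraph relation. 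Your endgame via pattern containment of $\pi_m^*$ in $\pi_n^*$ would be fine if the three-part alignment were established, but as written that alignment is the missing core of the proof rather than a routine step.
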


\begin{proof}
Suppose by contradiction that there is a graph $H:=S_{\pi_m^*}$ which is an induced subgraph 
of a graph $G:=S_{\pi_n^*}$ for some even $8\le m<n$. We fix an arbitrary embedding of $H$ into 
$G$, i.e., we assume that $V(H)\subset V(G)$. Since both graphs are connected bipartite, 
we may assume that exactly one of the following two possibilities holds:

\medskip

\begin{enumerate}
\item $A(H)\cup C(H) \subseteq A(G)\cup C(G)$ and $B(H)\subseteq B(G)$. 
\item $A(H)\cup C(H) \subseteq B(G)$ and $B(H)\subseteq A(G)\cup C(G)$
\end{enumerate}

We claim that the first possibility holds.

\begin{claim}
$A(H)\cup C(H) \subseteq A(G)\cup C(G)$ and $B(H) \subseteq B(G)$. 
\end{claim}

\begin{proof}
Note that, by definition, $A(G)\cup C(G)$ can be partitioned into 
two chains with respect to the neighborhood inclusion. On the other hand, 
the set $B(H)$ does not have this property, since $b_{n/2}, b_{n/2+1}, b_{n/2+2}$ 
is an antichain of length 3 with respect to the same relation. Indeed, 
$\rho^*(n/2)=n-3$, $\rho^*(n/2+1)=n-1$, $\rho^*(n/2+2)=n$ and
$\mu^*(n/2)=n$, $\mu^*(n/2+1)=n-1$ and $\mu^*(n/2+2)=n-2$.
This proves the claim.
\end{proof}

\medskip

We make the following helpful remark:

\begin{itemize}
\item If two vertices of $B(H)$ are incomparable with respect to the neighborhood inclusion in $B(H)$, 
then these two vertices must also be incomparable with respect to the neighborhood inclusion in $B(G)$.
\end{itemize}

Let $B'(H)$ be the incomparability graph for the relation of neighborhood inclusion on the vertex set $B(H)$. 
In other words, two vertices of $B(H)$ are adjacent in $B'(H)$ precisely when they are incomparable with respect 
to the neighborhood inclusion. We define $B'(G)$ similarly.

\medskip
Clearly, by the above remark, $B'(H)$ must be a subgraph of $B'(G)$. 
But for any even $n \ge 8$, the graph $B'(S_{\pi_n^*})$ is simply the permutation graph $G_{\pi_n^*}$ of $\pi_n^*$
and this graph is represented in Figure~\ref{fig:pn}. 

\begin{figure}[ht]
\begin{center}
\begin{picture}(150,50)
\put(-10,25){\circle*{4}} 
\put(20,25){\circle*{4}}
\put(50,25){\circle*{4}} 
\put(70,25){\circle{1}}
\put(80,25){\circle{1}} 
\put(90,25){\circle{1}}
\put(110,25){\circle*{4}} 
\put(140,25){\circle*{4}}
\put(-30,45){\circle*{4}} 
\put(-30,5){\circle*{4}}
\put(160,45){\circle*{4}} 
\put(160,5){\circle*{4}}
\put(-8,25){\line(1,0){26}} 
\put(22,25){\line(1,0){26}}
\put(52,25){\line(1,0){8}} 
\put(100,25){\line(1,0){8}}
\put(112,25){\line(1,0){26}} 
\put(-10,25){\line(-1,1){20}}
\put(-10,25){\line(-1,-1){20}} 
\put(140,25){\line(1,1){20}}
\put(140,25){\line(1,-1){20}}
\put(160,45){\line(0,-1){40}} 
\put(0,30){1} 
\put(38,30){2}
\put(115,30){$n-5$}
\end{picture}
\end{center}
\caption{The graph $B'(S_{\pi_n^*})=G_{\pi_n^*}$} 
\label{fig:pn}
\end{figure}
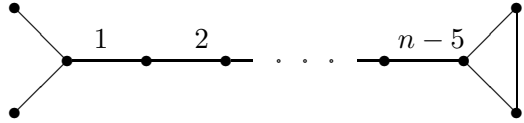

It is not difficult to see that the sequence of graphs $G_{\pi_n^*}$, $n\ge 8$, forms
an antichain with respect to the (induced) subgraph relation.  
Therefore, $B'(H)$ is not a subgraph of $B'(G)$. As a result, $H$ is not an induced subgraph of $G$. 
This contradiction completes the proof of Lemma~\ref{biconmain}.
\end{proof}

\medskip

Lemma~\ref{biconmain} and Claim~\ref{easyclaim} together imply the main result of this section:

\begin{thm} 
The class of $(P_8, \widetilde{P}_8)$-free biconvex graphs is not well-quasi-ordered by the induced subgraph relation. 
\end{thm}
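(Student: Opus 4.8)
The plan is to combine the two ingredients that have already been established: Claim~\ref{easyclaim}, which says that every $S$-graph is a $(P_8,\widetilde{P}_8)$-free biconvex graph, and Lemma~\ref{biconmain}, which says that the sequence $S_{\pi_8^*},S_{\pi_{10}^*},S_{\pi_{12}^*},\ldots$ is an infinite antichain under the induced subgraph relation. The final theorem is an immediate corollary of these two facts, so the proof is essentially a one-paragraph assembly rather than a fresh argument.

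More concretely, first I would observe that by Lemma~\ref{biconmain} the infinite sequence of $S$-graphs $\{S_{\pi_n^*}\}_{n\ge 8,\ n\text{ even}}$ forms an antichain with respect to the induced subgraph relation; in particular, it is an \emph{infinite} antichain, since the underlying permutations $\pi_n^*$ are defined for each even $n\ge 8$ and correspond to distinct graphs. Next I would invoke Claim~\ref{easyclaim} to note that every member of this sequence lies inside the class of $(P_8,\widetilde{P}_8)$-free biconvex graphs. Thus we have exhibited an infinite antichain entirely contained in that class. Since the existence of an infinite antichain directly contradicts the definition of well-quasi-ordering given in the Introduction, the class cannot be well-quasi-ordered by induced subgraphs.

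I would phrase the conclusion so that it tracks the formal definition: a quasi-order is a well-quasi-order only if it contains no infinite antichains, so producing one suffices. No additional case analysis or verification of the strictly-decreasing-sequence condition is needed, because the definition of wqo is violated by the antichain alone. The entire content has been pushed into the two prior results, and this theorem merely records their combination.

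The only thing worth being careful about is ensuring that the two prerequisites genuinely cover the same family of graphs and the same ordering. Since both Claim~\ref{easyclaim} and Lemma~\ref{biconmain} are stated for $S$-graphs $S_{\pi_n^*}$ under the induced subgraph relation, there is no gap to bridge, and no step here should present any obstacle at all; the proof is purely a matter of citing Lemma~\ref{biconmain} and Claim~\ref{easyclaim} and appealing to the definition of well-quasi-ordering.
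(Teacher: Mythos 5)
Your proposal is correct and matches the paper exactly: the theorem is stated there as an immediate consequence of Lemma~\ref{biconmain} and Claim~\ref{easyclaim}, with no further argument. Citing those two results and appealing to the definition of well-quasi-ordering is all that is required.
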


\section{Well-quasi-ordered classes of bipartite graphs}
In this section, we turn to positive results, i.e., to classes of bipartite graphs which are well-quasi-ordered
by the induced subgraph relation. 

\subsection{The class of $(P_7, S_{1,2,3})$-free bipartite graphs}

In \cite{Ding92}, Ding showed that $(P_7, S_{1,2,3},Sun_4)$-free bipartite graphs and $(P_6,\widetilde{P}_6)$-free
bipartite graphs are well-quasi-ordered by the induced subgraph relation. Now we extend both results
to the larger class of $(P_7, S_{1,2,3})$-free bipartite graphs. To this end, let us introduce the following
notation.

Given a set of bipartite graphs ${\cal F}$, we denote by $[{\cal F}]$ the set of bipartite graphs constructed from graphs in $\cal F$
by means of the following three binary operations defined for any two disjoint bipartite graphs 
$G_1=(X_1,Y_1,E_1)$ and $G_2=(X_2,Y_2,E_2)$:
\begin{itemize}
\item the disjoint union is the operation that creates out of $G_1$ and $G_2$ the bipartite graph
$G=(X_1\cup X_2,Y_1\cup Y_2,E_1\cup E_2)$,
\item the join is the operation that creates out of $G_1$ and $G_2$ the bipartite graph which is 
the bipartite complement of the disjoint union of $\widetilde{G}_1$ and 
$\widetilde{G}_2$,
\item the skew join is the operation that creates out of $G_1$ and $G_2$ the bipartite graph 
$G=(X_1\cup X_2,Y_1\cup Y_2,E_1\cup E_2\cup\{xy\ :\ x\in X_1,y\in Y_2\})$.
\end{itemize}

The importance of these operations is due to the following theorem. 

\begin{thm}\label{[H]}
If $\cal F$ is a set of bipartite graphs well-quasi-ordered by the induced subgraph relation, then so is $[{\cal F}]$.
\end{thm}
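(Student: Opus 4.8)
The plan is to encode every graph of $[\mathcal F]$ by a labelled decomposition tree and to transfer the well-quasi-ordering from trees to graphs via Kruskal's Tree Theorem, exactly as Damaschke does for cographs but with the extra structure of the skew join. Since every member of $[\mathcal F]$ is obtained from graphs of $\mathcal F$ by finitely many applications of the three binary operations, I would first record one such construction as a finite rooted \emph{plane} (left--right ordered) tree $T(G)$: each leaf carries a label from $\mathcal F$ (the base graph placed there) and each internal node carries one of three symbols $U,J,S$ for disjoint union, join, skew join, and has exactly two ordered children, its operands, the left one playing the role of $G_1$. The child order is needed only to remember the asymmetry of the skew join; for $U$ and $J$ it is irrelevant. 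Throughout I keep the ordered bipartition $(X,Y)$ attached to every graph and read ``induced subgraph'' as preserving this bipartition, which is what the definitions of $J$ and $S$ require. The trees need not be canonical: I simply fix one $T(G)$ per graph.

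The label set $Q:=\mathcal F\sqcup\{U,J,S\}$, with $\mathcal F$ ordered by induced subgraphs, the three symbols forming an antichain, and the two kinds of labels mutually incomparable, is a well-quasi-order (a disjoint union of two wqos). By the plane-tree version of Kruskal's Tree Theorem, the finite ordered $Q$-labelled trees are well-quasi-ordered by the order-preserving, label-nondecreasing, infimum-preserving (homeomorphic) embedding $\preceq$. Hence from any infinite sequence $G_1,G_2,\dots$ in $[\mathcal F]$ I may extract $i<j$ with $T(G_i)\preceq T(G_j)$.

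The heart of the argument is the Key Lemma: $T(H)\preceq T(G)$ implies $H\le_{\mathrm{ind}}G$. Two monotonicity facts drive it. First, each operation is monotone: if $H_1\le_{\mathrm{ind}}G_1$ and $H_2\le_{\mathrm{ind}}G_2$ respecting bipartitions, then $\mathrm{op}(H_1,H_2)\le_{\mathrm{ind}}\mathrm{op}(G_1,G_2)$, because restricting $\mathrm{op}(G_1,G_2)$ to $V(H_1)\cup V(H_2)$ reproduces exactly $\mathrm{op}(H_1,H_2)$, the cross-edges added by $J$ and $S$ being determined by the bipartition. Second, \emph{descendant monotonicity}: the graph computed at any node is an induced subgraph of the graph at its parent, hence at any ancestor, since each operand sits inside $U$, $J$, or $S$ as an induced subgraph. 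I would then prove, by induction on the subtree rooted at $x'\in T(H)$, that if the restricted embedding sends this subtree into $T(G)$ with $x'\mapsto x$, then the graph at $x'$ is an induced subgraph of the graph at $x$. For a leaf, label-incomparability forces $x$ to be a leaf and yields $\mathrm{label}(x')\le_{\mathrm{ind}}\mathrm{label}(x)$; for an internal node with symbol $\mathrm{op}$, label-comparability forces $x$ to carry the same symbol, the infimum-preserving (and, for $S$, order-preserving) property places the images of the two children of $x'$ in the two distinct child-subtrees of $x$ in the correct left/right order, and the induction hypothesis followed by descendant monotonicity shows each operand of $x'$ embeds in the corresponding operand of $x$, whereupon operation monotonicity closes the step. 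Applying this at the root of $T(H)$ and composing with descendant monotonicity up to the root of $T(G)$ gives $H\le_{\mathrm{ind}}G$.

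I expect the main obstacle to be precisely this Key Lemma, and within it the two features absent from the cograph setting: the \emph{gaps} permitted by a homeomorphic embedding, where an operand of $H$ may be matched far below the corresponding operand of $G$, which are absorbed cleanly by descendant monotonicity; and the \emph{non-commutativity} of the skew join, which forces me to use the plane-tree refinement of Kruskal's theorem and to track the ordered bipartition carefully, so that the operand playing the role of $G_1$ is matched to an $X$-side rather than a $Y$-side. Finally, the absence of infinite strictly decreasing sequences is automatic, a proper induced subgraph having fewer vertices, so together with the antichain conclusion this establishes that $[\mathcal F]$ is well-quasi-ordered.
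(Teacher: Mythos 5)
The paper does not actually prove this theorem: it states it and delegates the proof entirely to Theorems 4.1 and 4.4 of \cite{Ding92}. Your proposal supplies the argument that underlies those results --- a binary decomposition tree with leaves labelled by members of $\cal F$ and internal nodes labelled by the three operations, the plane (ordered) version of Kruskal's Tree Theorem over the wqo label set ${\cal F}\sqcup\{U,J,S\}$, and a Key Lemma transferring tree embeddings to induced-subgraph embeddings via descendant monotonicity and operation monotonicity. That structure is sound, and the two points you flag as delicate (absorbing the gaps of a homeomorphic embedding, and using the ordered refinement of Kruskal to respect the asymmetry of the skew join) are exactly the right ones.

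There is, however, one genuine gap, at the leaves. Your Key Lemma needs the leaf labels to be compared by the \emph{bipartition-respecting} induced subgraph relation; with the plain relation it is false. Concretely, let $a$ denote a single vertex placed on the $X$-side and $b$ a single vertex placed on the $Y$-side: these are equivalent as plain graphs, so the tree of the skew join $S(a,b)=K_2$ homeomorphically embeds into the tree of $S(b,a)=2K_1$, yet $K_2$ is not an induced subgraph of $2K_1$. You therefore must order $\cal F$ by the side-respecting relation --- which you do implicitly when you declare that ``induced subgraph'' preserves the ordered bipartition --- but the hypothesis of the theorem only grants that $\cal F$ is wqo under the plain relation, and the passage from plain wqo to side-respecting wqo is neither automatic nor argued anywhere in your proof. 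For connected members of $\cal F$ it can be recovered by a Ramsey argument (extract an ascending chain, two-colour pairs according to whether the embedding respects or swaps the sides, and observe that two consecutive swaps compose to a side-respecting embedding); in general one must either prove such a reduction or strengthen the hypothesis to the labelled, side-respecting wqo, which is how Ding formulates his Theorems 4.1 and 4.4. For the applications in this paper ($\cal F$ finite) the point is harmless, but your proof as written does not close it.
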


For the proof of this theorem, we refer the reader to Theorems 4.1 and 4.4 from \cite{Ding92},
where the author used this result (without formulating it implicitly) in his proof that 
$(P_7, S_{1,2,3}$, $Sun_4)$-free bipartite graphs and $(P_6,\widetilde{P}_6)$-free bipartite 
graphs are will-quasi-ordered by the induced subgraph relation. 
Now we combine Theorem~\ref{[H]} with a result from \cite{FGV00} that can be formulated as follows.

\begin{thm}\label{weak}
The class of $(P_7, S_{1,2,3})$-free bipartite graphs is precisely $[\{K_1\}]$.
\end{thm}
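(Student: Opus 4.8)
The plan is to prove the asserted equality of classes by establishing the two inclusions separately, each by induction.

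For the inclusion $[\{K_1\}]\subseteq\{(P_7,S_{1,2,3})\text{-free bipartite graphs}\}$, I would induct on the construction of a member of $[\{K_1\}]$. The base graph $K_1$ is trivially $(P_7,S_{1,2,3})$-free, so it suffices to show that each of the three operations preserves this property. For the disjoint union this is immediate, since $P_7$ and $S_{1,2,3}$ are connected and therefore any induced copy would lie entirely in one summand, contradicting the inductive hypothesis. For the join and the skew join, the edges added between $G_1=(X_1,Y_1,E_1)$ and $G_2=(X_2,Y_2,E_2)$ form a complete bipartite pattern: all of $X_1\times Y_2$ (and, for the join, also all of $X_2\times Y_1$). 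Because $P_7$ and $S_{1,2,3}$ are forests they contain no induced $C_4$, so an induced copy can meet such a complete cross-pattern only by using at most one vertex on one of its two sides. A short finite case analysis on how this single ``interface'' vertex may attach then shows that no induced $P_7$ or $S_{1,2,3}$ can use vertices of both $G_1$ and $G_2$; hence any such copy lies within one summand, again contradicting the hypothesis.

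For the reverse inclusion I would induct on $|V(G)|$, where $G$ is $(P_7,S_{1,2,3})$-free bipartite with $|V(G)|\ge 2$. It is enough to show that $G$ decomposes as a disjoint union, a join, or a skew join of two smaller bipartite graphs, for these pieces are induced subgraphs of $G$ and hence again $(P_7,S_{1,2,3})$-free, so by induction they belong to $[\{K_1\}]$ and therefore so does $G$. The decomposition rests on the trichotomy that every such graph on at least two vertices either (a) is disconnected, or (b) has disconnected bipartite complement $\widetilde{G}$, or (c) admits a skew-join partition. Cases (a) and (b) are handled directly: if $G$ is disconnected it is the disjoint union of its components, and if $\widetilde{G}$ is disconnected then, since the join is by definition the bipartite complement of a disjoint union, $G$ is the join of the bipartite complements of the components of $\widetilde{G}$.

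The heart of the matter is case (c): assuming $G$ and $\widetilde{G}$ are both connected, I must produce a partition $V(G)=(X_1\cup Y_1)\cup(X_2\cup Y_2)$, with both sides nonempty, in which $X_1$ is complete to $Y_2$ and $X_2$ is anticomplete to $Y_1$. The plan is to locate this ``skew split'' from the neighborhood-inclusion structure forced by the absence of $P_7$ and $S_{1,2,3}$, showing that freeness prevents the mixed adjacencies from propagating so that the vertices separate into a complete-to block and a complementary anticomplete block; the cleanest route is to invoke the canonical bi-join decomposition of bipartite graphs, whose internal nodes are labelled union, join, skew, or prime, and to reduce the claim to showing that the quotient at any prime node contains $P_7$ or $S_{1,2,3}$. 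I expect this to be the main obstacle, and it is exactly where both forbidden subgraphs are genuinely required: note that $Sun_4$ is \emph{not} excluded and is itself produced by a skew join, which is precisely why the skew-join operation must be admitted beyond the union and join used for bi-cographs. The bulk of the work is therefore the finite classification showing that every nontrivial bi-join-prime bipartite graph contains an induced $P_7$ or $S_{1,2,3}$, together with an extension argument guaranteeing that a large prime graph inherits a small prime induced subgraph; the remaining bookkeeping with the three operations is routine, and Theorem~\ref{[H]} is not needed for this structural statement (it is used separately to combine the characterization with well-quasi-orderability).
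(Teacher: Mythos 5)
The paper offers no proof of this statement: it is quoted as a reformulation of the main result of Fouquet, Giakoumakis and Vanherpe \cite{FGV00} on bipartite graphs totally decomposable by the canonical decomposition, so the only ``proof'' to compare against is that citation. Taken on its own terms, your forward inclusion $[\{K_1\}]\subseteq\{(P_7,S_{1,2,3})\text{-free}\}$ is sound in outline: the three operations do preserve the class, and the mechanism you describe (a crossing copy either creates an induced $C_4$ inside the complete cross-pattern or is funnelled through a single interface vertex whose bounded degree in $P_7$ or $S_{1,2,3}$ kills the embedding) does go through. But be aware that the ``short finite case analysis'' is not as short as you suggest: the skew join is not symmetric in $G_1$ and $G_2$, and for $S_{1,2,3}$ one must track which branch of the tree can lie on which side (for instance, the length-$3$ branch can never sit on the $Y_2$-side of the unique $X_1$-vertex), so this step needs to be written out rather than waved at.

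The genuine gap is case (c) of the reverse inclusion. The assertion that every $(P_7,S_{1,2,3})$-free bipartite graph on at least two vertices which is connected and has connected bipartite complement admits a skew-join partition \emph{is} the theorem; once it is granted, everything else you do is routine bookkeeping. You reduce it to two claims that you do not prove: (i) a finite classification showing that every minimal prime graph for the canonical (bi-join) decomposition contains an induced $P_7$ or $S_{1,2,3}$, and (ii) an extension lemma of Schmerl--Trotter type guaranteeing that a large prime graph contains a small prime induced subgraph, so that minimality suffices. Neither is carried out, neither is easy, and together they constitute essentially the whole content of \cite{FGV00}. As written, your argument therefore relocates the central difficulty rather than closing it; to make this a proof you would either have to execute (i) and (ii) in full or, like the paper, simply cite \cite{FGV00} and drop the pretense of proving the structural direction.
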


Together, Theorem~\ref{[H]} and Theorem~\ref{weak} imply the following conclusion.

\begin{thm}
The class of $(P_7, S_{1,2,3})$-free bipartite graphs is well-quasi-ordered by the induced subgraph relation. 
\end{thm}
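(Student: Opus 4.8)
The plan is to obtain the result as an immediate corollary of the two preceding theorems, so that essentially no new combinatorial work is required at this stage. The strategy is to exhibit the target class as $[\mathcal{F}]$ for a trivially well-quasi-ordered seed family $\mathcal{F}$, and then to invoke the closure property of the bracket operation to transfer the well-quasi-ordering upward.

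First I would observe that the one-element family $\mathcal{F}=\{K_1\}$ is well-quasi-ordered by the induced subgraph relation for the most elementary of reasons: a single isomorphism type admits neither an infinite strictly decreasing sequence nor an infinite antichain, so the condition holds vacuously. This furnishes the base case. Next I would apply Theorem~\ref{[H]}, which guarantees that whenever $\mathcal{F}$ is wqo, its closure $[\mathcal{F}]$ under disjoint union, join, and skew join is again wqo; taking $\mathcal{F}=\{K_1\}$ yields that $[\{K_1\}]$ is well-quasi-ordered. Finally I would invoke Theorem~\ref{weak}, which identifies $[\{K_1\}]$ \emph{precisely} with the class of $(P_7,S_{1,2,3})$-free bipartite graphs. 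Chaining these two facts gives the claim in a single line.

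The main obstacle is not located in this chaining argument at all, but is entirely absorbed into the two cited results. The genuine difficulty lies in Theorem~\ref{weak}, namely the structural fact (drawn from \cite{FGV00}) that every $(P_7,S_{1,2,3})$-free bipartite graph decomposes recursively, via the three operations, all the way down to single vertices; and in Theorem~\ref{[H]}, whose proof requires the technically demanding Higman-type verification that each of disjoint union, join, and skew join preserves well-quasi-ordering. Since we are entitled to assume both statements, the present theorem follows at once, with no remaining work beyond the two substitutions above.
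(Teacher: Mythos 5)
Your proposal is correct and follows exactly the paper's own argument: the paper likewise derives the theorem immediately by combining Theorem~\ref{[H]} (applied to the trivially well-quasi-ordered family $\{K_1\}$) with Theorem~\ref{weak}, which identifies $[\{K_1\}]$ with the class of $(P_7,S_{1,2,3})$-free bipartite graphs. Nothing further is needed.
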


\subsection{The class of $(P_7, Sun_1)$-free bipartite graphs}
The graph $Sun_1$ is obtained from $Sun_4$ (Figure~\ref{fig:H}) by deleting three vertices of degree 1.
Therefore, the class of $(P_7, Sun_1)$-free bipartite graphs is a proper subclass of $(P_7, Sun_4)$-free 
bipartite graphs. In contrast to the result of Section~\ref{sec:notWQO}, below we prove that $(P_7, Sun_1)$-free 
bipartite graphs are well-quasi-ordered by the induced subgraph relation. According to Theorem~\ref{[H]},
it suffices to show that the set of {\it connected} $(P_7, Sun_1)$-free bipartite graphs is well-quasi-ordered
by this relation. The following lemma shows that the structure of connected graphs in this class containing a $C_4$
is rather simple.

\begin{lem}\label{lem:reduction}
Every connected $(P_7, Sun_1)$-free bipartite graph containing a $C_4$ is complete bipartite.
\end{lem}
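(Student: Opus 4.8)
The plan is to extract one clean structural consequence of $Sun_1$-freeness and then propagate it outward from the given $C_4$ using connectivity. Recall that in a bipartite graph a $C_4$ is precisely an induced $K_{2,2}$, and that $Sun_1$ is a $C_4$ with a single pendant edge. The crucial observation I would establish first is: \emph{if two vertices $w,y$ lying in the same part of $G$ have at least two common neighbors, then $N(w)=N(y)$.} Indeed, let $x,z$ be two common neighbors. If some vertex $p$ were adjacent to exactly one of $w,y$, say $p\sim w$ and $p\not\sim y$, then $p\notin\{x,z\}$ (as $x,z\sim y$), and the five distinct vertices $\{w,x,y,z,p\}$ would induce the $C_4$ on $w,x,y,z$ together with the pendant edge $wp$ and nothing else, i.e.\ an induced $Sun_1$ — a contradiction. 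Hence neither of $w,y$ has a private neighbor, so $N(w)=N(y)$. I expect this observation to do essentially all of the work, and it is worth noting that $P_7$-freeness does not seem to be needed.

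Next I would apply the observation to the given $C_4$, say on $\{a_1,a_2\}\subseteq V_1$ and $\{b_1,b_2\}\subseteq V_2$ with all four edges present. Since $a_1,a_2$ share the common neighbors $b_1,b_2$, the observation yields $N(a_1)=N(a_2)=:N_A$, and symmetrically $N(b_1)=N(b_2)=:N_B$. Here $N_A\subseteq V_2$ and $N_B\subseteq V_1$, and the whole $C_4$ lies inside $N_A\cup N_B$, since $b_1,b_2\in N_A$ and $a_1,a_2\in N_B$.

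The heart of the argument is then to show that every vertex of $N_A$ has neighborhood \emph{exactly} $N_B$, and symmetrically. Take any $b\in N_A$; then $b$ is adjacent to both $a_1$ and $a_2$, so $b$ and $b_1$ again share the two common neighbors $a_1,a_2$, and the observation forces $N(b)=N(b_1)=N_B$. The same reasoning gives $N(a)=N_A$ for every $a\in N_B$. Consequently the subgraph $G[N_A\cup N_B]$ is the complete bipartite graph with parts $N_A$ and $N_B$.

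Finally I would close the argument using connectivity. Put $S:=N_A\cup N_B$, which is nonempty. If $S\neq V(G)$, then connectedness gives an edge joining some $s\in S$ to some $u\notin S$. But if $s\in N_A$ then $N(s)=N_B\subseteq S$, and if $s\in N_B$ then $N(s)=N_A\subseteq S$; either way $u\in S$, a contradiction. Hence $S=V(G)$ and $G=G[N_A\cup N_B]$ is complete bipartite. The only delicate point is the neighborhood-equality observation together with its re-application to the vertices of $N_A$ and $N_B$; once that is set up, the propagation and the connectivity step are immediate.
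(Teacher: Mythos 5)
Your proof is correct and rests on the same key configuration as the paper's: a vertex adjacent to some but not all of one side of a biclique containing a $C_4$ yields an induced $Sun_1$. The paper packages this as a maximality argument (take a maximal complete bipartite subgraph containing the $C_4$ and show no outside vertex can attach to it), while you propagate neighborhood equalities from the $C_4$ and close with connectivity; both arguments use only $Sun_1$-freeness and connectivity, so your observation that $P_7$-freeness is not needed matches the paper's proof as well.
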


\begin{proof}
Let $H$ be a $(P_7, Sun_1)$-free bipartite graph containing a $C_4$.
Denote by $H'$ any maximal complete bipartite subgraph of $H$ containing the $C_4$. 
If $H'\ne H$, there must exist a vertex $v$ outside $H'$ that has a neighbor in $H'$. 
If $v$ is a adjacent to every vertex of $H'$ in the opposite part, then $H'$ is not maximal, 
and if $v$ has a non-neighbor in the opposite part of $H'$, the reader can easily find an induced $Sun_1$. 
The contradiction in both cases shows that $H'=H$, i.e., $H$ is a complete bipartite graph. 
\end{proof}

\medskip
It is not difficult to see that there is no infinite antichain of complete bipartite graphs, which follows, for instance,
from the fact that every complete bipartite graph is $P_4$-free and the class of $P_4$-free (not necessarily bipartite) 
graphs is well-quasi-ordered. This observation together with Lemma~\ref{lem:reduction} reduces the problem 
from $(P_7, Sun_1)$-free bipartite graphs to $(P_7,C_4)$-free bipartite graphs. 
The proof that the class of $(P_7,C_4)$-free bipartite graphs is well-quasi-ordered
is based on the following lemma.

\begin{lem}\label{key} 
No $(P_7, C_4)$-free bipartite graph contains $P_9$ as a subgraph (not necessarily induced).  
\end{lem}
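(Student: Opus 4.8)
The plan is to argue by contradiction: suppose a $(P_7,C_4)$-free bipartite graph $G$ contains a path $v_1-v_2-\cdots-v_9$ as a (not necessarily induced) subgraph, and derive a contradiction by exhibiting an induced $P_7$ among $v_1,\ldots,v_9$. Write $H:=G[\{v_1,\ldots,v_9\}]$. This graph is $C_4$-free, contains the Hamiltonian path $v_1\cdots v_9$, and, being an induced subgraph of $G$, contains no induced $P_7$. The whole task is to show that these three properties are incompatible, so that no such $P_9$ subgraph can exist.

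First I would classify the possible chords of the path. Since $H$ is bipartite, the endpoints of any edge lie in opposite parts, so a chord $v_iv_j$ has $|i-j|$ odd and at least $3$. A chord $v_iv_{i+3}$ together with the path edges $v_iv_{i+1},v_{i+1}v_{i+2},v_{i+2}v_{i+3}$ would yield an induced $C_4$ (its two diagonals join vertices in the same part, hence are non-edges), which is excluded. Thus every chord has distance $5$ or $7$: the distance-$5$ chords are $c_1=v_1v_6$, $c_2=v_2v_7$, $c_3=v_3v_8$, $c_4=v_4v_9$, and the distance-$7$ chords are $d_1=v_1v_8$, $d_2=v_2v_9$. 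Using $C_4$-freeness again rules out combinations: two consecutive chords $c_i,c_{i+1}$ create the induced $C_4$ on $v_i,v_{i+1},v_{i+6},v_{i+5}$, so no two consecutive $c_i$ coexist; and a few analogous four-cycles (for example $v_1,v_6,v_7,v_8$ for the pair $c_1,d_1$, and $v_1,v_2,v_9,v_8$ for $d_1,d_2$) show that each distance-$7$ chord is incompatible with all but one distance-$5$ chord and with the other distance-$7$ chord.

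The heart of the argument is a short case analysis built around the three length-$7$ windows of consecutive vertices $A=\{v_1,\ldots,v_7\}$, $B=\{v_2,\ldots,v_8\}$ and $C=\{v_3,\ldots,v_9\}$. Each window induces a $P_7$ \emph{unless} it contains a chord with both endpoints inside it; one checks that no distance-$7$ chord lies inside any window, while $A$ is spoiled only by $c_1$ or $c_2$, $B$ only by $c_2$ or $c_3$, and $C$ only by $c_3$ or $c_4$. If some window is chord-free we obtain an induced $P_7$ immediately. Otherwise all three windows are spoiled, and combining this with the ``no two consecutive $c_i$'' restriction forces the present distance-$5$ chords to be exactly $\{c_1,c_3\}$ or exactly $\{c_2,c_4\}$ (whereupon the incompatibilities above exclude every distance-$7$ chord). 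In each of these two configurations one produces an induced $P_7$ by \emph{rerouting} the path through the chords: for $\{c_1,c_3\}$ the vertex set $\{v_1,v_3,v_4,v_5,v_6,v_8,v_9\}$ induces $v_1-v_6-v_5-v_4-v_3-v_8-v_9$, and the case $\{c_2,c_4\}$ follows by the reflection $v_i\mapsto v_{10-i}$.

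I expect the main obstacle to be exactly these two ``crossing-chord'' cases $\{c_1,c_3\}$ and $\{c_2,c_4\}$: here no block of seven consecutive path-vertices is chord-free, so one genuinely has to route the induced path through both chords and then verify carefully that the chosen seven vertices carry no further edge — in particular that discarding $v_2$ and $v_7$ removes every unwanted adjacency, so that the remaining graph really is a $P_7$ and not, say, a cycle with a pendant. The rest of the work — listing which chord pairs force a $C_4$ and confirming that the distance-$7$ chords block no window — is routine once the window/reroute dichotomy is in place.
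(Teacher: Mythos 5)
Your proof is correct and follows essentially the same route as the paper: both arguments restrict the possible chords of the $P_9$ using bipartiteness and $C_4$-freeness, examine the three windows of seven consecutive path vertices, reduce to the chord configuration $\{v_1v_6,\,v_3v_8\}$ (up to reflection), and exhibit the identical rerouted induced path $v_1-v_6-v_5-v_4-v_3-v_8-v_9$. Your explicit handling of the distance-$7$ chords $v_1v_8$ and $v_2v_9$ is a small point the paper leaves implicit, but it does not change the structure of the argument.
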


\begin{proof}
Let $G$ be a $(P_7, C_4)$-free bipartite graph. To prove the lemma, we first derive the following 
helpful observation.

\begin{claim}\label{p7} If $P := (a_1, a_2, \ldots, a_7)$ is a copy of $P_7$ contained in $G$ as a subgraph, 
then $P$ has exactly one chord in $G$, either $a_1 a_6$ or $a_2 a_7$. 
\end{claim}

\begin{proof}
Since $G$ is $P_7$-free, $P$ must contain a chord, and since $G$ is bipartite, any chord 
of $P$ connects an even-indexed vertex to an odd-indexed one. Among 6 possible
chords of $P$ only $a_1a_6$ and $a_2a_7$ do not produce a $C_4$, and these two chords 
cannot be present in the graph simultaneously, since otherwise the vertices $a_1, a_2, a_7, a_6$ induce a $C_4$.
Therefore, $P$ must contain exactly one of $a_1 a_6$ or $a_2a_7$ as a chord.
\end{proof}

\medskip
Suppose now that $Q:= (b_1, b_2, \ldots, b_9)$ is a copy of $P_9$ contained as a subgraph in $G$,
and for $1\leq i\leq 3$, let $Q_i := (b_i b_{i+1} \ldots b_{i+6})$. If $b_1 b_6$ is a chord of $Q$, 
then Claim~\ref{p7} applied to each of $Q_1, Q_2$ and $Q_3$
implies that $Q$ contains exactly two chords, namely $b_1 b_6$ and $b_3 b_8$. But then 
the vertices $b_1, b_6, b_5, b_4, b_3, b_8, b_9$ induce a $P_7$, a contradiction. 

The case when $b_1 b_6$ is not a chord of $Q$ is symmetric and also leads (with the help of Claim~\ref{p7})
to an induced $P_7$ in $G$. The contradiction in both cases shows that $G$ does not contain $P_9$
as a subgraph. 
\end{proof}

\medskip
Now we combine Lemma~\ref{key} with the following result by Ding \cite{Ding92}.  

\begin{thm}[Ding \cite{Ding92}]\label{ding} 
For any fixed $k\ge 1$, the class of graphs containing no $P_k$ as a (not necessarily induced) subgraph
is well-quasi-ordered by the induced subgraph relation.  
\end{thm}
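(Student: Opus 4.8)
The plan is to reduce the statement to a well-quasi-ordering result for graphs of bounded \emph{tree-depth} and then prove the latter by induction using Higman's Lemma. The first step is the observation that a $P_k$-subgraph-free graph has tree-depth at most $k-1$. Indeed, let $G$ be connected and run a depth-first search from an arbitrary vertex, obtaining a rooted spanning tree $T$. In an undirected graph every non-tree edge found by a DFS is a \emph{back edge}, joining a vertex to one of its ancestors in $T$; hence every edge of $G$ joins a vertex to an ancestor, so $T$ is a tree-depth decomposition of $G$. Since each root-to-leaf path of $T$ is a path of $G$ and $G$ contains no $P_k$, such a path has at most $k-1$ vertices, whence $T$ has height at most $k-1$ and $\mathrm{td}(G)\le k-1$.

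The core is then the following strengthened claim: \emph{for every $d\ge 0$ and every well-quasi-ordered label set $(L,\le_L)$, the class of $L$-vertex-labeled graphs of tree-depth at most $d$ is well-quasi-ordered by the label-respecting induced subgraph relation}, where an embedding must send each vertex to a vertex whose label is $\ge_L$-larger. I would prove this by induction on $d$. For the inductive step I fix, for each connected such graph $G$, a decomposition tree of height $\le d$ with root $r$; deleting $r$ leaves components $G_1,\dots,G_m$, each of tree-depth $\le d-1$. I \emph{augment} the label of every vertex of $G_i$ by one extra coordinate in $\{0,1\}$ recording whether it is adjacent to $r$, ordered by \emph{equality}. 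Since a finite set under equality is a wqo and a product of wqo's is a wqo, the augmented labels range over the wqo set $L\times\{0,1\}$, so by the induction hypothesis the augmented components $\widehat G_i$ form a wqo. I then encode $G$ by the pair $\bigl(\ell(r),\{\!\{\widehat G_1,\dots,\widehat G_m\}\!\}\bigr)$. By Higman's Lemma multisets over a wqo are wqo, and a product of wqo's is wqo, so these encodings form a wqo; pulling this order back along the fixed encoding map yields a wqo $\preceq$ on connected graphs of tree-depth $\le d$. One further application of Higman's Lemma to multisets of connected components lifts the conclusion to all graphs of tree-depth $\le d$.

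It remains to verify that the encoding order implies the induced subgraph order, i.e. $G\preceq G'\Rightarrow G\le G'$. Given $\ell(r)\le_L\ell(r')$ and a Higman matching sending each $\widehat G_i$ into a distinct $\widehat G'_{\sigma(i)}$ by a label-respecting induced embedding, I map $r\mapsto r'$ and use the component embeddings inside each block. Adjacencies within a block are handled by induction; two vertices lying in different blocks are non-adjacent both in $G-r$ and in $G'-r'$; and adjacency to the root is preserved \emph{exactly} because the $\{0,1\}$-coordinate is compared by equality. Hence the map is an induced embedding, and since $\preceq$ is a wqo implying $\le$, the relation $\le$ is a wqo. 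Taking $L$ to be a one-point set and $d=k-1$ then yields the theorem.

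The step I expect to be the main obstacle is precisely this last verification of \emph{faithfulness}: one must ensure that the combinatorial embedding of encodings reconstructs an \emph{induced} subgraph, so that non-edges are preserved and no spurious edge to the root is created. This is exactly why the root-adjacency coordinate must be ordered by equality rather than by $\le_L$, and why the ancestor-closure property of the DFS/tree-depth decomposition (every edge joins an ancestor to a descendant) is essential: it guarantees that each edge of $G$ is accounted for either inside a single recursive block or by a root-adjacency bit at some level. Alternatively one may invoke Kruskal's Tree Theorem directly on the labeled height-bounded decomposition trees, but the inductive Higman argument keeps the correspondence with the induced subgraph relation fully explicit.
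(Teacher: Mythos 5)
The paper does not prove this statement at all --- it is quoted from Ding's 1992 paper and used as a black box --- so there is no internal proof to compare against. Your argument is correct and complete. The two ingredients both check out: (i) the DFS-tree observation (no cross edges in an undirected DFS, so the DFS tree is an elimination tree whose root-to-leaf paths are paths of $G$, giving tree-depth at most $k-1$ for $P_k$-subgraph-free graphs); and (ii) the induction on tree-depth with labels drawn from an arbitrary wqo, where the root-adjacency bit ordered by \emph{equality} is exactly what is needed to make the Higman-style comparison of encodings faithful for the \emph{induced} subgraph relation --- you correctly verify that every edge of $G$ lives either inside one component of $G-r$ or is accounted for by the root bit, and that distinct components map to distinct components so cross-component non-edges survive. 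This is essentially the now-standard ``classes of bounded tree-depth are (labelled-)wqo under induced subgraphs'' argument; Ding's original proof is in the same spirit (a rooted decomposition of bounded height plus Higman's Lemma, phrased via paths rather than tree-depth), so your route is not fundamentally different, but it is cleaner and strictly more general in that it yields the labelled statement and the tree-depth statement as byproducts. One presentational remark: you only need the forward implication $\mathrm{enc}(G)\preceq\mathrm{enc}(G')\Rightarrow G\le G'$ for a \emph{fixed} choice of decomposition per graph, which is exactly what you prove, so the argument stands as written.
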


Together Lemma~\ref{key} and Theorem~\ref{ding} imply the main conclusion of this section. 
 
\begin{thm}\label{ours} 
The class of $(P_7, C_4)$-free bipartite graphs is well-quasi-ordered by the induced subgraph relation. 
\end{thm}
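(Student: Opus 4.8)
The plan is to obtain Theorem~\ref{ours} as an immediate consequence of the two results just established, by sandwiching the class of $(P_7,C_4)$-free bipartite graphs between a smaller and a larger class, the latter already known to be well-quasi-ordered. First I would invoke Lemma~\ref{key}: every $(P_7,C_4)$-free bipartite graph contains no $P_9$ as a (not necessarily induced) subgraph. Consequently, the entire class under consideration is a subclass of $\mathcal{C}_9$, the class of all graphs that contain no $P_9$ as a subgraph.

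Next I would apply Ding's Theorem~\ref{ding} with $k=9$, which tells us precisely that $\mathcal{C}_9$ is well-quasi-ordered by the induced subgraph relation.

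Finally I would use the elementary fact that the well-quasi-ordering property is inherited by subclasses: if a quasi-order $(X,\le)$ is a wqo and $Y\subseteq X$, then $(Y,\le)$ is a wqo as well, since any infinite antichain (respectively, any infinite strictly decreasing sequence) in $Y$ would also be one in $X$, contradicting the assumption on $X$. Applying this with $X=\mathcal{C}_9$ and $Y$ the class of $(P_7,C_4)$-free bipartite graphs, both ordered by the induced subgraph relation, yields the theorem.

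The point worth emphasizing is that no real obstacle remains at this stage: all of the combinatorial difficulty has already been absorbed into Lemma~\ref{key}, which bounds the length of the longest path even as a non-induced subgraph, and into Ding's theorem. The only thing one should verify is the harmless observation that the induced-subgraph order on the smaller class is exactly the restriction of the induced-subgraph order on $\mathcal{C}_9$, so that no new comparabilities or incomparabilities are introduced when passing to the subclass, and hence the wqo property descends directly.
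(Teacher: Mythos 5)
Your proposal is correct and matches the paper exactly: the authors also derive Theorem~\ref{ours} immediately by combining Lemma~\ref{key} (no $(P_7,C_4)$-free bipartite graph contains $P_9$ as a subgraph) with Ding's Theorem~\ref{ding} for $k=9$, the inheritance of well-quasi-ordering by subclasses being left implicit. No further comment is needed.
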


\subsection{The class of $P_k$-free bipartite permutation graphs}
The class of bipartite permutation graphs is the intersection of bipartite graphs and permutation graphs.
This class is a subclass of biconvex graphs (see e.g. \cite{book}). In contrast to the result of 
Section~\ref{sec:biconnotWQO} we show that $P_k$-free bipartite permutation graphs are well-quasi-ordered 
by the induced subgraph relation for any fixed value of $k$. In general, bipartite permutation graphs are 
not well-quasi-ordered by this relation, since they contain the antichain of graphs of the form $H_i$ (Figure~\ref{fig:H}). 
Our proof is based on a number of known results. 

\medskip
Denote by $H_{n,m}$ the graph with $nm$ vertices which can be partitioned into $n$ independent sets
$V_1=\{v_{1,1},\ldots,v_{1,m}\}$, $\ldots$, $V_n=\{v_{n,1},\ldots,v_{n,m}\}$ so that for each $i=1,\ldots,n-1$
and for each $j=1,\ldots,m$, vertex $v_{i,j}$ is adjacent to vertices $v_{i+1,1}, v_{i+1,2},\ldots,v_{i+1,j}$
and there are no other edges in the graph. In other words, every two consecutive independent sets induce in $H_{n,m}$
a universal chain graph. An example of the graph $H_{n,n}$ with $n=5$ is given in
Figure~\ref{fig:H55}.

\begin{figure}[ht]
\begin{center}
\begin{picture}(300,200)
\put(50,0){\circle*{5}}
\put(100,0){\circle*{5}}
\put(150,0){\circle*{5}}
\put(200,0){\circle*{5}}
\put(250,0){\circle*{5}}
\put(500,50){\circle*{5}}
\put(100,50){\circle*{5}}
\put(150,50){\circle*{5}}
\put(200,50){\circle*{5}}
\put(250,50){\circle*{5}}
\put(50,100){\circle*{5}}
\put(50,50){\circle*{5}}
\put(100,100){\circle*{5}}
\put(150,100){\circle*{5}}
\put(200,100){\circle*{5}}
\put(250,100){\circle*{5}}
\put(50,150){\circle*{5}}
\put(100,150){\circle*{5}}
\put(150,150){\circle*{5}}
\put(200,150){\circle*{5}}
\put(250,150){\circle*{5}}
\put(50,200){\circle*{5}}
\put(100,200){\circle*{5}}
\put(150,200){\circle*{5}}
\put(200,200){\circle*{5}}
\put(150,200){\circle*{5}}
\put(250,200){\circle*{5}}
\put(50,0){\line(0,1){50}}
\put(50,0){\line(1,1){50}}
\put(50,0){\line(2,1){100}}
\put(50,0){\line(3,1){150}}
\put(50,0){\line(4,1){200}}
\put(100,0){\line(0,1){50}}
\put(100,0){\line(1,1){50}}
\put(100,0){\line(2,1){100}}
\put(100,0){\line(3,1){150}}
\put(150,0){\line(0,1){50}}
\put(150,0){\line(1,1){50}}
\put(150,0){\line(2,1){100}}
\put(200,0){\line(0,1){50}}
\put(200,0){\line(1,1){50}}
\put(250,0){\line(0,1){50}}

\put(50,50){\line(0,1){50}}
\put(50,50){\line(1,1){50}}
\put(50,50){\line(2,1){100}}
\put(50,50){\line(3,1){150}}
\put(50,50){\line(4,1){200}}
\put(100,50){\line(0,1){50}}
\put(100,50){\line(1,1){50}}
\put(100,50){\line(2,1){100}}
\put(100,50){\line(3,1){150}}
\put(150,50){\line(0,1){50}}
\put(150,50){\line(1,1){50}}
\put(150,50){\line(2,1){100}}
\put(200,50){\line(0,1){50}}
\put(200,50){\line(1,1){50}}
\put(250,50){\line(0,1){50}}

\put(50,100){\line(0,1){50}}
\put(50,100){\line(1,1){50}}
\put(50,100){\line(2,1){100}}
\put(50,100){\line(3,1){150}}
\put(50,100){\line(4,1){200}}
\put(100,100){\line(0,1){50}}
\put(100,100){\line(1,1){50}}
\put(100,100){\line(2,1){100}}
\put(100,100){\line(3,1){150}}
\put(150,100){\line(0,1){50}}
\put(150,100){\line(1,1){50}}
\put(150,100){\line(2,1){100}}
\put(200,100){\line(0,1){50}}
\put(200,100){\line(1,1){50}}
\put(250,100){\line(0,1){50}}

\put(50,150){\line(0,1){50}}
\put(50,150){\line(1,1){50}}
\put(50,150){\line(2,1){100}}
\put(50,150){\line(3,1){150}}
\put(50,150){\line(4,1){200}}
\put(100,150){\line(0,1){50}}
\put(100,150){\line(1,1){50}}
\put(100,150){\line(2,1){100}}
\put(100,150){\line(3,1){150}}
\put(150,150){\line(0,1){50}}
\put(150,150){\line(1,1){50}}
\put(150,150){\line(2,1){100}}
\put(200,150){\line(0,1){50}}
\put(200,150){\line(1,1){50}}
\put(250,150){\line(0,1){50}}

\end{picture}
\end{center}
\caption{The graph $H_{5,5}$}
\label{fig:H55}
\end{figure}
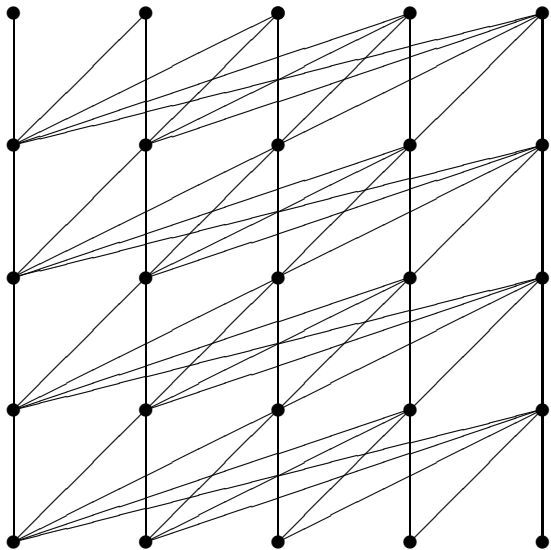

It is not difficult to see that the graph $H_{n,n}$ is a bipartite permutation graph. 
Moreover, it was proved in \cite{LR07} that $H_{n,n}$ is an $n$-universal bipartite permutation graph
in the sense that every bipartite permutation graph with $n$ vertices is an induced subgraph of $H_{n,n}$.
If a connected bipartite permutation graph is $P_k$-free, it occupies at most $k$ consecutive levels of $H_{n,n}$.
In other words, every connected $P_k$-free bipartite permutation graph is an induced subgraph of $H_{k,n}$.

In order to prove that $P_k$-free bipartite permutation graphs are well-quasi-ordered, we will show that
any connected graph in this class is a $k$-letter graph. This notion was introduced in \cite{Pet02} and 
its importance for our study is due to the following result also proved in \cite{Pet02}. 

\begin{thm}\label{thm:well}
For any fixed $k$, the class of $k$-letter graphs is well-quasi-ordered by
the induced subgraph relation. 
\end{thm}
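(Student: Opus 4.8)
The plan is to recall the definition of a $k$-letter graph and then reduce the statement to Higman's Lemma on words over a finite alphabet. Recall that a $k$-letter graph is specified by three ingredients: a finite alphabet $\Sigma$ with $|\Sigma|\le k$, a \emph{decoder} $P\subseteq\Sigma\times\Sigma$, and a word $w=w_1w_2\cdots w_n$ over $\Sigma$; the associated graph $G_P(w)$ has vertex set $\{1,\dots,n\}$, and two positions $i<j$ are joined by an edge precisely when $(w_i,w_j)\in P$. The class of $k$-letter graphs is the collection of all graphs arising in this way as $\Sigma$, $P$ and $w$ vary.

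First I would fix the alphabet $\Sigma$ and the decoder $P$ and study the single family $\mathcal{G}_P:=\{G_P(w):w\in\Sigma^*\}$. The crucial observation is that the map $w\mapsto G_P(w)$ is order-preserving from the set $\Sigma^*$ of finite words, quasi-ordered by the subword (subsequence) embedding, to the set of graphs quasi-ordered by the induced subgraph relation. Indeed, if $u$ is a subsequence of $v$, say $u=v_{i_1}v_{i_2}\cdots v_{i_m}$ with $i_1<\cdots<i_m$, then the subgraph of $G_P(v)$ induced by the vertex set $\{i_1,\dots,i_m\}$ is isomorphic to $G_P(u)$: for $a<b$ the positions $i_a<i_b$ are adjacent in $G_P(v)$ iff $(v_{i_a},v_{i_b})=(u_a,u_b)\in P$, which is exactly the adjacency rule in $G_P(u)$. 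The relative order of positions is what makes this work even when $P$ is not symmetric, and a subsequence embedding preserves precisely this order.

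Next I would invoke Higman's Lemma: since $\Sigma$ is finite, hence trivially well-quasi-ordered by equality, the set $\Sigma^*$ of finite words is well-quasi-ordered by the subsequence embedding. Combining this with the order-preserving map above yields that $\mathcal{G}_P$ is well-quasi-ordered by induced subgraphs, using the routine fact that the image of a well-quasi-ordered set under an order-preserving map is again well-quasi-ordered: given any infinite sequence $G_P(w^{(1)}),G_P(w^{(2)}),\dots$, the preimage words contain a good pair $w^{(s)}\preceq w^{(t)}$ with $s<t$, and order-preservation turns this into $G_P(w^{(s)})\le G_P(w^{(t)})$.

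Finally I would pass from a single decoder to all of them simultaneously. For a fixed $k$ there are only finitely many alphabets $\Sigma$ up to relabeling and, for each, only finitely many decoders $P\subseteq\Sigma\times\Sigma$, so the class of $k$-letter graphs is a finite union of the families $\mathcal{G}_P$, each of which is well-quasi-ordered. A finite union of well-quasi-ordered subsets of a common quasi-order is again well-quasi-ordered: in any infinite sequence some single family $\mathcal{G}_P$ receives infinitely many terms by pigeonhole, and wqo of that family supplies a good pair, which is also a good pair of the original sequence. I expect the only genuinely nontrivial ingredient to be Higman's Lemma; the remaining steps---monotonicity of $w\mapsto G_P(w)$, pushing wqo through an order-preserving map, and the finite union---are all routine, with the single point requiring care being that adjacency is read off from both the letters and their left-to-right order, so that only subsequence embeddings (not arbitrary rearrangements) are guaranteed to yield induced-subgraph embeddings.
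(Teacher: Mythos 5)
Your proof is correct, but note that the paper does not prove this theorem at all: it is quoted from Petkov\v sek's paper \cite{Pet02}, where the notion of a $k$-letter graph is introduced. The argument you give --- monotonicity of $w\mapsto G_P(w)$ from the subsequence order to the induced subgraph order, Higman's Lemma for words over a finite alphabet, and a finite union over the finitely many decoders on an alphabet of size at most $k$ --- is essentially Petkov\v sek's original proof, and all of its steps check out.
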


The $k$-letter graphs have been characterized in \cite{Pet02} as follows.

\begin{thm}[Petkov\v sek \cite{Pet02}]\label{thm:kletter}
A graph $G=(V,E)$ is a $k$-letter graph if and only if
\begin{itemize}
\item[1.] there is a partition $V_1,\ldots,V_p$ of $V(G)$ with $p\le k$ such that each $V_i$ is either 
a clique or an independent set in $G$,
\item[2.] there is a linear ordering $L$ of $V(G)$ such that for each pair of indices $1\le i,j\le p$,
$i\ne j$, the intersection of $E$ with $V_i\times V_j$ is one of 
\begin{itemize}
\item[(a)] $L\cap (V_i\times V_j)$,
\item[(b)] $L^{-1}\cap (V_i\times V_j)$,
\item[(c)] $V_i\times V_j$,
\item[(d)] $\emptyset$.
\end{itemize}
\end{itemize}
\end{thm}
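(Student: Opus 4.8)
The plan is to prove the characterization by translating directly between the two descriptions of the graph: the letter encoding (a word $w=w_1\cdots w_n$ over a $k$-letter alphabet together with a \emph{decoder}, i.e.\ a set $P$ of ordered pairs of letters, where for positions $s<t$ the vertices $s,t$ are adjacent precisely when $(w_s,w_t)\in P$) and the data of a partition $V_1,\dots,V_p$ together with a linear order $L$. Since the statement is an equivalence, I would treat the two implications separately, each by a short case analysis on the decoder.

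First I would show that every $k$-letter graph satisfies conditions 1 and 2. Given an encoding by a word $w$ over $\{\ell_1,\dots,\ell_k\}$ with decoder $P$, let $V_i$ be the set of positions of $w$ carrying the letter $\ell_i$; this is a partition into at most $k$ classes. Two positions $s<t$ with $w_s=w_t=\ell_i$ are adjacent iff $(\ell_i,\ell_i)\in P$, so each $V_i$ is uniformly a clique (when $(\ell_i,\ell_i)\in P$) or an independent set (otherwise), giving condition 1. Taking $L$ to be the order of positions in $w$, for a fixed pair $i\ne j$ the adjacency of $u\in V_i$ and $v\in V_j$ depends only on which of $(\ell_i,\ell_j)$ and $(\ell_j,\ell_i)$ lie in $P$: both present yields $E\cap(V_i\times V_j)=V_i\times V_j$ (case (c)); neither present yields $\emptyset$ (case (d)); only $(\ell_i,\ell_j)\in P$ yields exactly the pairs $(u,v)$ with $u<_L v$, i.e.\ $L\cap(V_i\times V_j)$ (case (a)); and only $(\ell_j,\ell_i)\in P$ yields the reverse $L^{-1}\cap(V_i\times V_j)$ (case (b)). This exhausts all four possibilities.

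For the converse I would reverse this dictionary. Given a partition $V_1,\dots,V_p$ with $p\le k$ and a linear order $L$ satisfying condition 2, form the word $w$ by listing the vertices in increasing $L$-order and assigning to a vertex of $V_i$ the letter $\ell_i$; pad the alphabet with unused letters to reach size exactly $k$ if $p<k$. Define $P$ by putting $(\ell_i,\ell_i)\in P$ exactly when $V_i$ is a clique, and for $i\ne j$ selecting the pair(s) dictated by the case that holds: both ordered pairs in case (c), neither in case (d), $(\ell_i,\ell_j)$ only in case (a), and $(\ell_j,\ell_i)$ only in case (b). One then checks that the resulting letter graph coincides with $G$, which is just the computation of the first part read backwards; the crucial point is that condition 2 guarantees the edge pattern between $V_i$ and $V_j$ is \emph{uniform} across the whole block, so a single decoder entry correctly reproduces every edge of that block (and condition 1 does the same on the diagonal).

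The routine work is the per-case verification in both directions, and I expect the only genuinely delicate point to be the bookkeeping around ordered versus unordered pairs: an edge $\{u,v\}$ with $u\in V_i,\,v\in V_j$ must be read off from the decoder via the letters of its endpoints taken \emph{in $L$-order}, and it is precisely the possible asymmetry of $P$ (i.e.\ $(\ell_i,\ell_j)\in P$ while $(\ell_j,\ell_i)\notin P$) that produces the two directed cases (a) and (b). Keeping this orientation consistent throughout is the main thing to get right; once the correspondence between decoder entries and the case labels is pinned down, both implications follow by inspection.
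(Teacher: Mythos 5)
Your argument is correct, but note that the paper itself offers no proof of this statement: it is quoted verbatim as a theorem of Petkov\v sek and used as a black box, with the reader referred to \cite{Pet02}. Your two-way translation between the word-plus-decoder encoding and the partition-plus-linear-order data is exactly the standard argument (and essentially the one in Petkov\v sek's paper), and you correctly isolate the only delicate point, namely reading the decoder entry for an edge via the letters of its endpoints taken in $L$-order so that the asymmetric decoder cases produce exactly the oriented cases (a) and (b).
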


\begin{cor}\label{cor}
Connected $P_k$-free bipartite permutation graphs are $k$-letter graphs.
\end{cor}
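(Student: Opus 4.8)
The plan is to verify the $k$-letter characterization of Theorem~\ref{thm:kletter} directly on the universal graph $H_{k,n}$ and then transfer it to the graphs of interest by a closure argument. Recall from the discussion preceding the corollary that every connected $P_k$-free bipartite permutation graph is an induced subgraph of $H_{k,n}$ for a suitable $n$. Hence it will suffice to establish two facts: first, that $H_{k,n}$ is itself a $k$-letter graph, and second, that the class of $k$-letter graphs is closed under taking induced subgraphs.

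For the closure I would argue straight from Theorem~\ref{thm:kletter}. Given a $k$-letter graph $G$ with a partition $V_1,\ldots,V_p$ ($p\le k$) and a linear order $L$ as in that theorem, and any induced subgraph $G'=G[U]$, I would restrict the partition to $V_i':=V_i\cap U$ (discarding the empty parts, so at most $k$ parts remain) and restrict $L$ to $U$. Each $V_i'$ stays a clique or an independent set, and intersecting any of the identities $E\cap(V_i\times V_j)\in\{L\cap(V_i\times V_j),\,L^{-1}\cap(V_i\times V_j),\,V_i\times V_j,\,\emptyset\}$ further with $V_i'\times V_j'$ preserves its form. Thus $G'$ again satisfies the conditions of Theorem~\ref{thm:kletter} and is a $k$-letter graph.

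The heart of the matter is showing that $H_{k,n}$ is a $k$-letter graph, and the main obstacle is exhibiting a \emph{single} global linear order that simultaneously realizes each of the $k-1$ consecutive ``half-graphs'' as a restriction of $L$ or of $L^{-1}$. For the partition I would take the $k$ levels $V_1,\ldots,V_k$, each an independent set, so condition~1 holds with $p=k$. For the order I propose to sort the vertices by the key $g(v_{i,j}):=2j-i$, breaking the unavoidable ties arbitrarily. The key computation is that a tie $2j-i=2j'-i'$ forces $i-i'$ to be even, so vertices in consecutive levels are never tied, and that $v_{i,j}<_L v_{i+1,l}$ holds exactly when $l>j$. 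Since $v_{i,j}$ is adjacent to $v_{i+1,l}$ exactly when $l\le j$, the edge set between $V_i$ and $V_{i+1}$ is precisely $L^{-1}\cap(V_i\times V_{i+1})$, one of the admissible chain forms of Theorem~\ref{thm:kletter}; between non-consecutive levels there are no edges, which is the empty form regardless of $L$. Hence every pair of parts meets the requirement and $H_{k,n}$ is a $k$-letter graph.

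Combining the three ingredients gives the corollary: a connected $P_k$-free bipartite permutation graph embeds as an induced subgraph into the $k$-letter graph $H_{k,n}$, and induced subgraphs of $k$-letter graphs are again $k$-letter graphs, so the graph itself is a $k$-letter graph.
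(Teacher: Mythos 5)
Your proof is correct and follows essentially the same route as the paper's: embed the graph into the universal graph $H_{k,n}$, note that $k$-letter graphs are closed under induced subgraphs by Theorem~\ref{thm:kletter}, and exhibit a single global linear order on $H_{k,n}$ realizing each consecutive half-graph as a restriction of $L$ or $L^{-1}$. The only difference is cosmetic --- the paper lists the vertices column by column while you sort by the key $2j-i$ --- and both orders do the job.
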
 
 
\begin{proof}
From Theorem~\ref{thm:kletter} it follows that an induced subgraph of a $k$-letter graph is again
a $k$-letter graph. In addition, we have seen already that any connected $P_k$-free bipartite permutation graph 
is an induced subgraph of $H_{k,n}$. Therefore, all we have to do is to prove that $H_{k,n}$ is a $k$-letter graph.
To this end, we define a partition $V_1,\ldots,V_k$ of the vertices of $H_{k,n}$ by defining $V_i$ to be the $i$-th row 
of $H_{k,n}$. Thus the first condition of Theorem~\ref{thm:kletter} is satisfied. Then we define a linear ordering $L$
of the vertices of $H_{k,n}$ by listing first the vertices of the first column consecutively from bottom to top, 
then the vertices of the second column, and so on. Now let's take any two subsets $V_i$ and $V_j$ with $i\ne j$.
If they are not consecutive rows of the graph, then the intersection of $E$ with $V_i\times V_j$ is empty.
If they are consecutive, then the intersection of $E$ with $V_i\times V_j$ is either $L\cap (V_i\times V_j)$
(if $i>j$) or $L^{-1}\cap (V_i\times V_j)$ (if $i<j$). Thus the second condition of Theorem~\ref{thm:kletter} 
is satisfied, which proves the corollary.
\end{proof}

Combining Corollary~\ref{cor} with Theorems~\ref{[H]} and~\ref{thm:well} we conclude that 

\begin{cor} 
For any fixed $k$, the class of $P_k$-free bipartite permutation graphs is well-quasi-ordered by the induced subgraph relation.   
\end{cor}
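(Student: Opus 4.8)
The plan is to assemble the final corollary directly from the three ingredients already established: Corollary~\ref{cor}, Theorem~\ref{thm:well}, and Theorem~\ref{[H]}. Essentially all of the structural content has already been carried out in Corollary~\ref{cor}, which identifies the connected graphs in the class as $k$-letter graphs, so what remains is a short closure argument combined with the fact that well-quasi-ordering is inherited by subclasses.

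First I would let $\mathcal{F}$ denote the set of all \emph{connected} $P_k$-free bipartite permutation graphs. By Corollary~\ref{cor}, $\mathcal{F}$ is contained in the class of $k$-letter graphs, which by Theorem~\ref{thm:well} is well-quasi-ordered by the induced subgraph relation. Since any infinite antichain or infinite strictly decreasing sequence lying within $\mathcal{F}$ would also be one within the class of $k$-letter graphs, the well-quasi-ordering passes down to the subset $\mathcal{F}$; hence $\mathcal{F}$ is itself well-quasi-ordered.

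Next I would apply Theorem~\ref{[H]} to the wqo set $\mathcal{F}$, concluding that its closure $[\mathcal{F}]$ under disjoint union, join, and skew join is well-quasi-ordered. The remaining observation is that every $P_k$-free bipartite permutation graph lies in $[\mathcal{F}]$: because $P_k$ is connected, each connected component of such a graph is again $P_k$-free, and it stays bipartite and a permutation graph (these properties being hereditary), so each component belongs to $\mathcal{F}$; the whole graph is then the disjoint union of its components, and disjoint union is one of the three operations generating $[\mathcal{F}]$. Thus the class of $P_k$-free bipartite permutation graphs is a subclass of the wqo set $[\mathcal{F}]$, and so, by the same inheritance principle used above, it is itself well-quasi-ordered.

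I do not anticipate a genuine obstacle, since the real work is packaged inside Corollary~\ref{cor} and the cited theorems. The only points requiring care are the two uses of the principle that a subset of a wqo set is wqo, and the verification that the full class is captured by the disjoint-union operation in the definition of $[\mathcal{F}]$ --- that is, that connectedness, $P_k$-freeness, bipartiteness, and the permutation-graph property are all preserved on passing to connected components. Both checks are routine, so the argument is, in effect, a one-paragraph synthesis of the earlier results.
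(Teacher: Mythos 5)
Your argument is correct and is exactly the synthesis the paper intends: it combines Corollary~\ref{cor} with Theorem~\ref{thm:well} to get that the connected graphs in the class are wqo, and then uses Theorem~\ref{[H]} (via disjoint union of connected components) to extend to the whole class. The paper states this combination in one line without spelling out the details; your write-up simply makes those routine checks explicit.
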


\end{document}